\theoremstyle{plain}
  \newtheorem{theorem}{Theorem}[section]
  \newtheorem{corollary}[theorem]{Corollary}
  \newtheorem{lemma}[theorem]{Lemma}
  \newtheorem{fact}[theorem]{Fact}
  \newtheorem{proposition}[theorem]{Proposition}
\theoremstyle{definition}
  \newtheorem{definition}[theorem]{Definition}
  \newtheorem{example}[theorem]{Example}
  \newtheorem{remark}[theorem]{Remark}
\newtheorem{maina}{Main Theorem}
  \newtheorem{mainb}{Main Theorem}
  \newtheorem{mainc}{Main Theorem}
\newcommand{\ob}{\mathrm{ob}}
\newcommand{\Cat}{\mathbf{Cat}}
\newcommand{\op}{\mathrm{op}}
\newcommand{\coop}{\mathrm{coop}}
\newcommand{\Bicat}{\mathbf{BiCat}}
\newcommand{\ide}{\mathrm{id}}
\newcommand{\A}{\mathcal{A}}
\newcommand{\B}{\mathcal{B}}
\newcommand{\F}{\mathcal{F}}
\newcommand{\G}{\mathcal{G}}
\newcommand{\E}{\mathcal{E}}
\newcommand{\id}{\mathrm{id}}
\newcommand{\Gr}{\mathrm{Gr}}
\begin{document}

\author{Kohei Tanaka}
\address{Institute of Social Sciences, School of Humanities and Social Sciences, Academic Assembly, Shinshu University, Japan.}
\email{tanaka@shinshu-u.ac.jp}
\title[The Euler characteristic of a bicategory]{The Euler characteristic of a bicategory and the product formula for fibered bicategories}

\maketitle

{\footnotesize 2010 Mathematics Subject Classification : 55P10, 46M20}

{\footnotesize Keywords : Euler characteristic; bicategories; fibered bicategories}

\begin{abstract}
This paper studies the Euler characteristic of a bicategory based on the concept of magnitudes introduced by Leinster. We focus on its invariance with respect to biequivalence and on the product formula for Buckley's fibered bicategories.
\end{abstract}


\section{Introduction}

The Euler characteristic of a space is well known as a classical topological invariant.
It has two important homotopical properties: an invariance with respect to homotopy equivalence, and the product formula for fibrations as follows.

\begin{fact}\label{fact1.1}
Let $\chi(X)$ denote the Euler characteristic of a suitable space $X$.
\begin{enumerate}
\item If $X$ is homotopy equivalent to $Y$, then $\chi(X)=\chi(Y)$.
\item For a suitable fibration $E \to B$ with fiber $F$ over base $B$ that is path connected, then $\chi(E)=\chi(B)\chi(F)$. 
\end{enumerate}
\end{fact}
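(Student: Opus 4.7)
The plan is to reduce both statements to standard homological calculations, working under the tacit hypothesis that the spaces involved have the homotopy type of finite CW complexes so that $\chi(X)=\sum_{i\geq 0}(-1)^{i}\dim_{\mathbb{Q}}H_{i}(X;\mathbb{Q})$ is a well-defined integer.

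For (1), the key input is that singular rational homology is a homotopy functor: any homotopy equivalence $f\colon X\to Y$ induces isomorphisms $f_{*}\colon H_{i}(X;\mathbb{Q})\xrightarrow{\cong} H_{i}(Y;\mathbb{Q})$ in every degree. Taking dimensions and forming the alternating sum yields $\chi(X)=\chi(Y)$ immediately.

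For (2), I would invoke the Leray--Serre spectral sequence of the fibration $F\to E\to B$, whose $E_{2}$-page has the form $E_{2}^{p,q}=H_{p}(B;\mathcal{H}_{q}(F;\mathbb{Q}))$, where $\mathcal{H}_{q}(F;\mathbb{Q})$ is the local system determined by the monodromy action of $\pi_{1}(B)$ on $H_{q}(F;\mathbb{Q})$. The argument then runs in three steps. First, because $\chi$ depends only on dimensions and any monodromy automorphism preserves dimension, the local coefficient system contributes the same alternating dimension count as the constant system, so $\sum_{p,q}(-1)^{p+q}\dim E_{2}^{p,q}=\chi(B)\chi(F)$; path-connectedness of $B$ is precisely what makes $\dim H_{q}(F;\mathbb{Q})$ globally well-defined. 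Second, each differential $d_{r}$ preserves the alternating dimension sum, because $\dim E_{r+1}^{p,q}=\dim\ker d_{r}-\dim\mathrm{im}\,d_{r}$ and the signs telescope across $r$. Third, $E_{\infty}$ is the associated graded of a filtration on $H_{*}(E;\mathbb{Q})$, and the alternating sum of dimensions of an associated graded equals that of the filtered module, so the total alternating dimension of $E_{\infty}$ recovers $\chi(E)$.

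The main technical hurdle is justifying the monodromy reduction in the first step; once one grants that alternating dimension is insensitive to the twist, the remainder is bookkeeping with the spectral sequence. The path-connectedness hypothesis enters essentially here, ensuring that the homotopy type of the fiber, and hence $\chi(F)$, is an intrinsic invariant of the fibration rather than depending on a choice of basepoint in $B$.
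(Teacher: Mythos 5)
Your proposal is correct, and in fact the paper offers no proof of this statement at all: Fact \ref{fact1.1} is quoted as classical background (the two homotopical properties of the topological Euler characteristic that the rest of the paper emulates categorically), so there is nothing internal to compare against. Your argument is the standard one. Part (1) via functoriality of rational homology is immediate. Part (2) via the Leray--Serre spectral sequence is the usual route, and your three-step bookkeeping (alternating sum on the $E_{2}$-page, invariance under passing to each successive page, and recovery of $\chi(E)$ from the associated graded of the filtration on $H_{*}(E;\mathbb{Q})$) is sound, granting the tacit finiteness hypotheses you impose so that all pages are finite-dimensional and the spectral sequence degenerates after finitely many steps.

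The one step you flag as a hurdle --- that the local system contributes the same alternating count as the constant system --- is most cleanly justified not by an appeal to ``monodromy preserves dimension'' at the level of homology groups, but at the level of chains: for a rank-$n$ local system $\mathcal{L}$ on a finite CW complex $B$, the cellular chain complex with coefficients in $\mathcal{L}$ has $p$-th chain group of dimension $n\cdot c_{p}$, where $c_{p}$ is the number of $p$-cells, and since the Euler characteristic of a finite chain complex equals that of its homology one gets $\sum_{p}(-1)^{p}\dim H_{p}(B;\mathcal{L})=n\,\chi(B)$ regardless of the twist. Applying this with $\mathcal{L}=\mathcal{H}_{q}(F;\mathbb{Q})$ and summing over $q$ gives your first step rigorously. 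With that patch the proof is complete; it is the expected proof of this classical fact, consistent with the role the paper assigns it.
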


Nowadays, Euler characteristic is defined not only for such geometric objects, but also for combinatorial objects: posets \cite{Rot64}, groupoids \cite{BD01}, and categories \cite{Lei08}. 
Leinster showed the properties of the Euler characteristic for a finite category based on Fact \ref{fact1.1} and clarified the relation with the topological Euler characteristic taking classifying spaces \cite{Lei08}.

Furthermore, he developed the theory of Euler characteristics for enriched categories (called {\em magnitudes} in his paper \cite{Lei13}), and the authors examined its homotopical behavior from the viewpoint of model categories \cite{NT16}. Leinster's idea can be naturally extended to bicategories or to more general weak higher categories \cite{GNS}. To define Euler characteristic, it is necessary to consider some finite setting. We deal with {\em measurable} bicategories; the objects are finite, and each category of morphisms is equivalent to a finite category.

This paper focuses on the homotopical properties of the Euler characteristic of a measurable bicategory based on Leinster's results. The notion of {\em biequivalence} is often used in the homotopy theory of bicategories. Indeed, Lack's model structure \cite{Lac04} on the category of bicategories assigns biequivalences to the weak equivalences.

\begin{maina}[Theorem \ref{biequ_Euler}] If measurable bicategories $\A$ and $\B$ are biequivalent to each other, then $\A$ has Euler characteristic if and only if $\B$ has one also, and in that case, $\chi(\A) = \chi(\B)$.
\end{maina}

The theorem above is a generalization for equivalence of categories \cite{Lei08} and for biequivalence of  2-categories \cite{NT16}. This can be easily proved from the proof of Theorem 2.4 in \cite{Lei08}, as also mentioned in Theorem 2.22 of \cite{GNS}.
Main Theorem 1 above and the coherence theorem relate the Euler characteristic for bicategories to that for 2-categories. Moreover, they clarify the relation with the topological Euler characteristics of the classifying spaces for bicategories.

\begin{mainb}[Theorem \ref{classify_bi}]
If $\A$ is a measurable and acyclic bicategory, then it has Euler characteristic and $\chi(\A)=\chi(B\A)$, where $B\A$ is the classifying space of $\A$.
\end{mainb}

On the other hand, the notion of fibrations for bicategories was introduced by 
Street \cite{Str72}, \cite{Str80}, and Buckley \cite{Buc14}.
We propose the concept of {\em (co)fibered in pseudogroupoids} as a special case of Buckley's fibration, and show the product formula with respect to Euler characteristic as follows.

\begin{mainc}[Theorem \ref{main_2}] Suppose that the following measurable bicategories have Euler characteristic. For a lax functor $\E \to \B$ fibered and cofibered in pseudogroupoids with fiber $\F$ over base $\B$ that is connected, then $\chi(\E)=\chi(\B)\chi(\F)$ 
\end{mainc}

The authors have approached the above problem from the viewpoint of the model structure on enriched categories \cite{NT16}. This paper provides another method using the Grothendieck construction for bicategories introduced by Buckley \cite{Buc14}.

The rest of this paper is organized as follows.
Section 2 reviews the theory of Euler characteristic of finite categories, following Leinster's paper \cite{Lei08}.
Section 3 describes our main theorems, including the definitions of and basic facts about bicategories and lax functors. 
We refer to Buckley's paper \cite{Buc14} for the definitions and properties of fibrations and the Gorthendieck construction.


\section{Review of the Euler characteristic of finite categories}

We begin by reviewing the properties of the Euler characteristic of finite categories, referring to Leinster's paper \cite{Lei08}. Almost all the definitions and facts described here were established in his paper.

\begin{definition}
For finite sets $I$ and $J$, an $I\times J$ \textit{matrix} is a function $I\times J\to \mathbb{Q}$. 
For an $I\times J$ matrix $\zeta$ and a $J\times H$ matrix $\eta$, the $I\times H$ matrix $\zeta \eta$ is defined by $\zeta \eta(i,h)=\sum_j \zeta(i,j)\eta(j,h)$ for each $i \in I$ and $h \in H$. 
An $I\times J$ matrix $\zeta$ has a $J\times I$ transpose $\zeta^{\op}$. Given a finite set $I$, we write $u_I: I\to \mathbb{Q}$ (or simply $u$) as the column vector with $u_I(i)=1$ for all $i$ in $I$. Let $\zeta$ be an $I\times J$ matrix. A \textit{weighting} on $\zeta$ is a column vector $k^{*} :J \to \mathbb{Q}$ such that $\zeta k^{*}=u_I$. 
A \textit{coweighting} on $\zeta$ is a row vector $k_{*}:I\to \mathbb{Q}$ such that $k_{*} \zeta =u^{\op}_J$. 
The matrix $\zeta$ {\em has Euler characteristic} if it has a weighting and a coweighting. Then, its {\em Euler characteristic} is defined as
$$|\zeta|=\sum_j k^{j}=\sum_i k_{i} \in \mathbb{Q}.$$
Note that this definition does not depend on the choice of a weighting and a coweighting.
\end{definition}

\begin{remark}\label{regular}
If a similarity matrix $\zeta$ is regular, it always has Euler characteristic.
A weighting $k^{*}$ is given as $\sum_{j} \zeta^{-1}(i,j)$ and a coweighting $k_{*}$ is given as $\sum_{i} \zeta^{-1}(i,j)$.
\end{remark}

For a small category $A$, let $\ob(A)$ denote the set of objects, and for $x,y \in \ob(A)$, let $A(x,y)$ denote the set of morphisms from $x$ to $y$.

\begin{definition}
For a finite category $A$ (whose objects and morphisms are finite), the {\em similarity matrix} 
$\zeta_{A} : \ob(A) \times \ob(A) \to \mathbb{Q}$ is defined by the number of the set of morphisms $A(i,j)$.
We say that $A$ {\em has Euler characteristic} if $\zeta_{A}$ does, and then we define the {\em Euler characteristic} $\chi(A)$ as $|\zeta_{A}|$.
\end{definition}

The definition above is related to the classical Euler characteristic taking classifying spaces of small categories. Here, the classifying space $BA$ of a small category $A$ is the geometric realization of the nerve simplicial set $NA$.

\begin{proposition}[Proposition 2.11 of \cite{Lei08}]\label{classify}
If $A$ is a finite acyclic category (it never has a circuit of morphisms), then it has Euler characteristic and $\chi(A) = \chi(BA)$.
\end{proposition}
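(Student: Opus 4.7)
The plan is to exploit acyclicity to put $\zeta_A$ into upper-triangular form with unit diagonal, invert it by a nilpotent geometric-series argument, and then match the resulting alternating sum term-by-term against the cellular Euler count for $BA$. First I would totally order $\ob(A)$ by extending the relation $x \le y \iff A(x,y) \ne \emptyset$; acyclicity makes this a genuine partial order (no circuit of morphisms rules out both $x \le y \le x$ for $x \ne y$ and non-identity endomorphisms), so a linear extension exists. With respect to that ordering, $\zeta_A$ is upper triangular, and $|A(x,x)| = 1$ forces every diagonal entry to equal $1$. In particular $\zeta_A$ is invertible, so by Remark \ref{regular}, $A$ has Euler characteristic.

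Next, decompose $\zeta_A = I + N$, where $N(x,y) = |A(x,y)| - \delta_{x,y}$ counts non-identity morphisms. The same upper-triangular structure makes $N$ strictly upper triangular, hence nilpotent, and therefore $\zeta_A^{-1} = \sum_{n \ge 0} (-1)^n N^n$ is a finite sum. A direct calculation identifies $N^n(x,y)$ with the number of composable chains $x \to x_1 \to \cdots \to x_{n-1} \to y$ in which every arrow is non-identity. Summing over all pairs $(x,y)$ yields
$$\chi(A) = \sum_{x,y} \zeta_A^{-1}(x,y) = \sum_{n \ge 0} (-1)^n\, |N_n^{\mathrm{nd}}(A)|,$$
where $N_n^{\mathrm{nd}}(A)$ denotes the set of non-degenerate $n$-simplices of the nerve $NA$ (with the convention $N_0^{\mathrm{nd}}(A) = \ob(A)$).

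Finally, $BA$ carries the canonical CW structure whose $n$-cells correspond exactly to non-degenerate $n$-simplices of $NA$, so its cellular Euler characteristic is precisely $\sum_{n} (-1)^n |N_n^{\mathrm{nd}}(A)|$, matching the expression above. The step I expect to require most care is the identification of $N^n(x,y)$ with non-degenerate $n$-chains: it relies on the observation that a simplex in the nerve of a category is non-degenerate if and only if none of its edges is an identity, which is exactly the condition enforced by multiplying entries of $N = \zeta_A - I$ rather than of $\zeta_A$. Convergence of the alternating sum is not an issue, because nilpotence of $N$ reflects the topological fact that, in a finite acyclic category, the lengths of chains of non-identity morphisms are uniformly bounded, making $NA$ eventually supported in finitely many dimensions.
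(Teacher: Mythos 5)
Your proof is correct and is essentially the argument the paper is implicitly relying on: the paper gives no proof of its own here, simply citing Proposition 2.11 of Leinster's paper, and your route (linear extension of the acyclicity order, $\zeta_A = I + N$ with $N$ strictly upper triangular, $\zeta_A^{-1}=\sum_n(-1)^nN^n$ counting non-degenerate $n$-simplices, matched against the CW structure on $BA$) is exactly Leinster's. All the delicate points you flag — antisymmetry from acyclicity, non-degeneracy of a nerve simplex being equivalent to all its arrows being non-identities, and finiteness of the chain lengths — are handled correctly.
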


Given two small categories $A$ and $B$, we can consider the {\em coproduct} and the {\em product} of them.
The coproduct $A \coprod B$ is defined by $\ob(A) \coprod \ob(B)$ as objects and 
\[
\left(A \coprod B \right)(x,y) = \begin{cases} A(x,y) & x,y \in \ob(A),\\
B(x,y) & x,y \in \ob(B),\\
\emptyset & \textrm{otherwise}.\\
\end{cases}
\] 
On the other hand, the product $A \times B$ is defined by $\ob(A) \times \ob(B)$ as objects and 
\[
\left( A \times B \right) ((a,b), (a',b'))=A(a,a') \times B(b,b').
\]

\begin{proposition}[Proposition 2.6 of \cite{Lei08}]\label{coproduct}
Let $A_{1}, \ldots, A_{n}$ be finite categories having Euler characteristics.
\begin{enumerate}
\item $\chi \left(\coprod_{i=1}^{n}A_{i} \right) = \sum_{i=1}^{n} \chi(A_{i})$.
\item $\chi \left(\prod_{i=1}^{n} A_{i} \right) = \prod_{i=1}^{n} \chi(A_{i})$.
\end{enumerate}
\end{proposition}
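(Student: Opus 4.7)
The plan is to reduce both statements to the case $n=2$ by an easy induction on $n$, and then argue directly from the definition of (co)weightings, exploiting the block/tensor structure of the similarity matrix.

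For part (1), I would first observe that, because there are no morphisms between $A_1$ and $A_2$ in $A_1 \coprod A_2$, the similarity matrix $\zeta_{A_1 \coprod A_2}$ is block diagonal with blocks $\zeta_{A_1}$ and $\zeta_{A_2}$, when rows and columns are ordered so that $\ob(A_1)$ precedes $\ob(A_2)$. Given weightings $k^{*}_1$ on $\zeta_{A_1}$ and $k^{*}_2$ on $\zeta_{A_2}$, the concatenation $k^{*}$ on $\ob(A_1)\sqcup\ob(A_2)$ defined by restricting to $k^{*}_i$ on each block satisfies $\zeta_{A_1 \coprod A_2}\, k^{*}=u_{\ob(A_1)\sqcup \ob(A_2)}$, and the analogous construction produces a coweighting. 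Hence $A_1 \coprod A_2$ has Euler characteristic, and summing the entries gives
\[
\chi(A_1\coprod A_2)=\sum_{j\in\ob(A_1)} k^{j}_1+\sum_{j\in\ob(A_2)} k^{j}_2=\chi(A_1)+\chi(A_2).
\]

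For part (2), the key identity is $\zeta_{A\times B}((a,b),(a',b'))=\zeta_A(a,a')\,\zeta_B(b,b')$, so the similarity matrix of the product is the tensor (Kronecker) product of the two similarity matrices. If $k^{*}_A$ is a weighting on $\zeta_A$ and $k^{*}_B$ is one on $\zeta_B$, then I would define $k^{*}(a,b):=k^{*}_A(a)\,k^{*}_B(b)$ and verify, by splitting the double sum, that
\[
\sum_{(a',b')}\zeta_{A\times B}((a,b),(a',b'))\,k^{*}(a',b')=\Bigl(\sum_{a'}\zeta_A(a,a')k^{*}_A(a')\Bigr)\Bigl(\sum_{b'}\zeta_B(b,b')k^{*}_B(b')\Bigr)=1.
\]
The analogous product gives a coweighting, and then
\[
\chi(A\times B)=\sum_{(a,b)}k^{*}_A(a)k^{*}_B(b)=\Bigl(\sum_a k^{*}_A(a)\Bigr)\Bigl(\sum_b k^{*}_B(b)\Bigr)=\chi(A)\chi(B).
\]
Induction on $n$ finishes both claims.

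There is no real obstacle here: the statements reduce to linear algebra about block diagonal and tensor product matrices, and the compatibility of the defining equation $\zeta k^{*}=u$ with these two operations is immediate. The only point that requires a moment's care is checking that the constructed vectors are genuine (co)weightings for the matrices $\zeta_{A_1\coprod A_2}$ and $\zeta_{A\times B}$, which amounts to the two displayed computations above.
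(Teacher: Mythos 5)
Your proof is correct and follows essentially the same route as the source the paper cites for this statement (Leinster's Proposition 2.6 together with his Lemma 1.13): the coproduct case via the block-diagonal structure of $\zeta_{A_1\coprod A_2}$ and the product case via the Kronecker-product identity $\zeta_{A\times B}=\zeta_A\otimes\zeta_B$, with the explicit concatenated and tensored (co)weightings verifying both existence and the numerical formulas.
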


As an important property, the Euler characteristic of a finite category is an invariant with respect to equivalence of categories.

\begin{proposition}[Proposition 2.4 of \cite{Lei08}]\label{invariant}
If $A$ and  $B$ are finite categories equivalent to each other, then $A$ has Euler characteristic if and only if $B$ does, and in that case, $\chi(A)=\chi(B)$.
\end{proposition}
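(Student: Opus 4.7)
The plan is to reduce the proposition to a comparison between each category and its skeleton. The crucial preliminary observation I would begin with is that, for isomorphic objects $x \cong y$ of a category $A$, composition with a chosen isomorphism yields bijections $A(z,x) \cong A(z,y)$ and $A(x,z) \cong A(y,z)$ for every object $z$. In particular, isomorphic objects give identical rows and identical columns of the similarity matrix $\zeta_A$; this single fact drives the whole argument.

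Next, I would fix a skeleton $A_0 \subseteq A$ (a full subcategory containing exactly one object from each isomorphism class of $A$) and prove that $A$ has Euler characteristic if and only if $A_0$ does, with $\chi(A) = \chi(A_0)$. Given a weighting $k_0^{*}$ on $A_0$, I would extend it to $A$ by setting $k^{*}(y) = k_0^{*}(\bar y)/n_{\bar y}$, where $\bar y \in A_0$ is the representative of the isomorphism class of $y$ and $n_{\bar y}$ is the size of that class. A routine computation using the row invariance above verifies $\zeta_A k^{*} = u$, and summing gives $\sum_y k^{*}(y) = \sum_{y_0} k_0^{*}(y_0)$. Conversely, from a weighting $k^{*}$ on $A$ one recovers a weighting on $A_0$ by contracting, $k_0^{*}(y_0) = \sum_{y \in [y_0]} k^{*}(y)$, with the same total. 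The same translations work verbatim for coweightings, so the two Euler characteristics exist together and coincide.

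Third, for equivalent finite categories $A$ and $B$, I would invoke the standard fact that any two skeleta of equivalent categories are isomorphic as categories: if $F : A \to B$ is an equivalence, then its restriction to a skeleton $A_0$ is fully faithful and injective on objects, with image again a skeleton of $B$, hence an isomorphism of categories onto $F(A_0)$. Since isomorphic finite categories have similarity matrices that differ only by a simultaneous permutation of rows and columns, $\chi(A_0) = \chi(F(A_0))$. Combining the three steps gives $\chi(A) = \chi(A_0) = \chi(F(A_0)) = \chi(B)$, with existence transferred at each stage.

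The main obstacle, modest as it is, lies in the normalization factor $1/n_{\bar y}$: one must verify that the averaged weighting really satisfies the defining equation $\zeta_A k^{*} = u$ on the nose, and that the two translations between $A$ and $A_0$ preserve the total weight. Once the row-invariance from the first step is in place this is pure bookkeeping, and the rest of the argument is formal.
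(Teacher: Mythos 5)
Your proof is correct, but it is organized differently from the argument this paper relies on. The paper does not reprove Leinster's Proposition 2.4 directly; instead, its proof of the bicategorical generalization (Theorem \ref{biequ_Euler}), which specializes to this statement, transports a weighting $l^{\bullet}$ on $B$ across the equivalence in a single step via the averaging formula $k^{a} = \bigl(\sum_{b \in [Fa]} l^{b}\bigr)/[a]^{\sharp}$ and verifies $\zeta_{A}k^{*}=u$ by one computation that groups the sum over isomorphism classes. You instead factor the argument into three independent pieces: row and column invariance of $\zeta_{A}$ under isomorphism of objects, the reduction $\chi(A)=\chi(A_{0})$ to a skeleton (spreading a weighting over an isomorphism class with the normalization $1/n_{\bar y}$ in one direction, contracting in the other), and the standard fact that equivalent categories have isomorphic skeleta. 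The mathematical content is the same --- your composite contract--transport--spread map is precisely the paper's averaging formula --- but your version is more modular and makes the role of skeleta explicit, whereas the paper's one-shot calculation has the advantage of generalizing verbatim to biequivalences of measurable bicategories, where there is no skeleton to pass to but one can still choose representatives $a_{1},\dots,a_{m}$ of the equivalence classes, which is exactly what the paper does. Your verifications (the bijections $A(z,x)\cong A(z,y)$ induced by composition with an isomorphism, preservation of the total weight in both directions, and the permutation argument for isomorphic finite categories) are all sound, so no gap remains.
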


The Euler characteristic has another significant property: the product formula for functors fibered and cofibered in groupoids. Fibered functors or fibered categories were originally introduced by Grothendieck for the descent theory \cite{Gro71}. Functors (co)fibered in groupoids are a special case of them.
They behave similarly to topological fibrations (or Kan fibrations in simplicial sets); indeed, a category of small categories admits a model structure having them as fibrations \cite{Tan13}. 

\begin{definition}\label{fibered_functor}
Let $P : E \to B$ be a functor. 
\begin{enumerate}
\item A morphism $f : e \to e'$ in $E$ is called {\em cartesian} if in each $g : e'' \to e'$ and $h : P(e) \to P(e'')$ with $h \circ P(g)=P(f)$, there exists a unique lift $\tilde{h} : e \to e'$ of $h$ satisfying $\tilde{h} \circ g = f$.
\item $P$ is called {\em fibered} if any morphism  $f : b \to P(e)$ in $B$ has a lift $\tilde{f} : e' \to e$ as a cartesian morphism in $E$. We denote $e'$ by $f^{*}(e)$ and call it the {\em pullback} of $e$ by $f$.
\item $P$ is called {\em fibered in groupoids} if every morphism in $E$ is cartesian 
and any morphism $b \to P(e)$ in $B$ has a lift $\tilde{f} : f^{*}(e) \to e$. Obviously, functors fibered in groupoids are a special case of fibered functors.
\end{enumerate}
We can define the dual notions above, {\em cofibered functors} and {\em functors cofibered in groupoids}, by reversing the directions of morphisms.
\end{definition}

Fibered functors are closely related to the notion of Grothendieck construction given as follows.

\begin{definition}
Let $B$ be a small category, and let $F : B^{\op} \to \Cat$ be a lax functor (see Section 3 below) from the opposite category of $B$ to the 2-category $\Cat$ of small categories, functors, and natural transformations.
The {\em Grothendieck construction} $\Gr(F)$ of $F$ is a small category defined as follows:
\begin{itemize}
\item The set of objects consists of pairs of objects $(b,x) \in \ob(B) \times \ob(Fb)$.
\item The set of morphisms $\Gr(F)((b,x),(c,y))$ consists of pairs of morphisms 
$(f,u) \in B(b,c) \times F(c)(x,Ff(y))$.
\end{itemize}

The composition uses natural transformations equipped with the lax functor.
However, composition is not necessary to calculate the Euler characteristic of a category.
It only depends on how many objects and morphisms there are.
\end{definition}

\begin{proposition}[Proposition 2.8 of \cite{Lei08}]\label{gr}
Let $B$ be a finite category having Euler characteristic, and let $F : B^{\op} \to \Cat$ be a lax functor valued in finite categories.
Suppose that $k_{*}$ is a coweighting on $\zeta_{B}$ and that the Grothendieck construction $\mathrm{Gr}(F)$ and each $Fb$ have Euler characteristics.
Then, 
\[
\chi(\Gr(F)) = \sum_{b \in \ob(B)} k_{b} \chi(Fb).
\]
\end{proposition}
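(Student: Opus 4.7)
The plan is to build an explicit coweighting on $\Gr(F)$ out of the given coweighting $k_{*}$ on $B$ and a choice of coweighting on each fiber $Fb$, and then read off $\chi(\Gr(F))$ as the sum of the entries of that coweighting.

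The starting observation is that, directly from the definition of the Grothendieck construction, the similarity matrix of $\Gr(F)$ factors along the base as
\[
\zeta_{\Gr(F)}\bigl((b,x),(c,y)\bigr) \;=\; \sum_{f \in B(b,c)} \zeta_{Fb}\bigl(x, Ff(y)\bigr),
\]
where $Ff : Fc \to Fb$ is the image of $f : b \to c$ under the contravariant $F$. This numerical factorization is the only input from $\Gr(F)$ that we need; in particular the composition law plays no role, consistent with the remark in the definition. Since each $Fb$ has Euler characteristic, we fix a coweighting $l^{b}_{*}$ on it, so that $\sum_{x} l^{b}_{x}\,\zeta_{Fb}(x,z) = 1$ for every $z \in \ob(Fb)$ and $\chi(Fb) = \sum_{x} l^{b}_{x}$.

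With these pieces in place, I would introduce $m_{(b,x)} := k_{b}\, l^{b}_{x}$ on $\ob(\Gr(F))$ and check that it is a coweighting on $\zeta_{\Gr(F)}$ by a Fubini-style interchange:
\[
\sum_{(b,x)} m_{(b,x)}\,\zeta_{\Gr(F)}\bigl((b,x),(c,y)\bigr) \;=\; \sum_{b} k_{b}\! \sum_{f \in B(b,c)} \Bigl(\sum_{x} l^{b}_{x}\, \zeta_{Fb}\bigl(x, Ff(y)\bigr)\Bigr).
\]
The parenthesized inner sum collapses to $1$ by the defining property of $l^{b}_{*}$, leaving $\sum_{b} k_{b}\,\zeta_{B}(b,c) = 1$ by the defining property of $k_{*}$. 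Since $\Gr(F)$ has Euler characteristic by hypothesis, $\chi(\Gr(F))$ equals the sum of the entries of any coweighting, and evaluating on $m$ produces
\[
\chi(\Gr(F)) \;=\; \sum_{(b,x)} k_{b}\, l^{b}_{x} \;=\; \sum_{b \in \ob(B)} k_{b}\, \chi(Fb),
\]
which is the asserted formula.

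The only real obstacle I foresee is bookkeeping around variance: because $F$ is defined on $B^{\op}$, one must keep careful track of the fact that $Ff(y)$ lies in $Fb$ rather than $Fc$, so that the inner coweighting $l^{b}_{*}$ is applied against $\zeta_{Fb}$ in the correct slot. Once that factorization of $\zeta_{\Gr(F)}$ is set up correctly, the remainder of the argument is a formal manipulation of finite sums and uses nothing about $F$ beyond the coweighting axioms for its fibers.
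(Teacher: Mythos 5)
Your proposal is correct and follows essentially the same route as the source: the paper defers to Leinster's Proposition 2.8 for this statement, and the argument there (as well as the paper's own bicategorical analogue in Lemmas \ref{lemma_1} and \ref{lemma_2} and Proposition \ref{gr_2}) is exactly your product coweighting $k_b l^b_x$ verified by the same Fubini-style collapse. Your handling of the variance is also right; note the paper's definition of $\Gr(F)$ has a typo ($F(c)(x,Ff(y))$ should read $F(b)(x,Ff(y))$), and your factorization of $\zeta_{\Gr(F)}$ uses the corrected form.
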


Note that the above is the dual statement of Leinster's; he considered a covariant functor $B \to \Cat$ and used weightings.

Let us review some basic facts about functors (co)fibered in groupoids and the Grothendieck construction. For a functor $P : E \to B$ fibered in groupoids, the {\em fiber category} $P^{-1}(b)$ of an object $b$ in $B$ is a groupoid (every morphism is invertible). Here, the fiber category $P^{-1}(b)$ is a subcategory of $E$ consisting of 
$\ob(P^{-1}(b)) = \{e \in \ob(E) \mid P(e)=b\}$ and $P^{-1}(b)(e,e')=\{ f \in E(e,e') \mid Pf=\ide_{b}\}$.
It yields a lax functor $P^* : B^{\op} \to \Cat$ (see Example \ref{fiber_functor} below for the details), and the Grothendieck construction $\Gr(P^*)$ is equivalent to $E$. If $P$ is fibered and cofibered in groupoids, a morphism $f : b \to b'$ in $B$ induces an equivalence of categories $f^{*} : P(b') \to P(b)$.
It implies that any fiber category is equivalent when the base category is connected (any two objects are combined for a zigzag sequence of morphisms); hence, we write it simply as $F$.

\begin{theorem}\label{product_1}
Let a functor $P : E \to B$ be fibered and cofibered in groupoids between finite categories with fiber category $F$ over base $B$ that is connected. If $E$ and $B$ have Euler characteristics, then
\[
\chi(E)=\chi(B)\chi(F).
\]
\end{theorem}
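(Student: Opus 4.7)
The plan is to reduce this to Leinster's Grothendieck-construction formula (Proposition \ref{gr}) together with the invariance of Euler characteristic under equivalence (Proposition \ref{invariant}).

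First, I would invoke the fact, recalled in the paragraph just before the statement, that $P : E \to B$ being fibered in groupoids yields a lax functor $P^{*} : B^{\op} \to \Cat$ with $P^{*}(b) = P^{-1}(b)$, and that its Grothendieck construction $\Gr(P^{*})$ is equivalent to $E$. Since $E$ has Euler characteristic by hypothesis, Proposition \ref{invariant} gives $\chi(\Gr(P^{*})) = \chi(E)$.

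Next, I would verify the remaining hypotheses of Proposition \ref{gr} applied to $P^{*}$. Each fiber $P^{-1}(b)$ is a subcategory of the finite category $E$, hence a finite groupoid, and every finite groupoid has Euler characteristic (via the Baez--Dolan formula $\sum_{[x]} 1/|\mathrm{Aut}(x)|$). Because $P$ is moreover cofibered in groupoids, the excerpt notes that every morphism $f : b \to b'$ in $B$ induces an equivalence of categories $f^{*} : P^{-1}(b') \to P^{-1}(b)$; since $B$ is connected, chaining these along a zigzag between any two objects and applying Proposition \ref{invariant} shows that all fibers share a single Euler characteristic, namely $\chi(F)$.

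Finally, picking any coweighting $k_{*}$ on $\zeta_{B}$ (which exists by hypothesis that $B$ has Euler characteristic) and applying Proposition \ref{gr} to $P^{*}$ yields
\[
\chi(E) \;=\; \chi(\Gr(P^{*})) \;=\; \sum_{b \in \ob(B)} k_{b}\,\chi(P^{-1}(b)) \;=\; \chi(F) \sum_{b \in \ob(B)} k_{b} \;=\; \chi(F)\chi(B),
\]
using $\chi(B) = \sum_{b} k_{b}$. The main obstacle is not computational but rather a careful audit of the black boxes being invoked, in particular that the equivalence $\Gr(P^{*}) \simeq E$ is genuine (so that Proposition \ref{invariant} legitimately transports Euler characteristic) and that each pullback functor $f^{*}$ is an equivalence of the underlying fiber categories; both are standard facts about fibered categories cited from the paragraph preceding the theorem.
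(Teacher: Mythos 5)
Your proposal is correct and follows essentially the same route as the paper's proof: identify $E$ with $\Gr(P^{*})$ up to equivalence, note that the fiber is a finite groupoid with Euler characteristic and that all fibers agree by connectedness, and then apply Proposition \ref{gr} with a coweighting on $\zeta_{B}$. Your version is simply more explicit about auditing the hypotheses (the equivalence $\Gr(P^{*}) \simeq E$, the existence of Euler characteristic for finite groupoids, and the role of the cofibered condition), which the paper leaves implicit.
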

\begin{proof}
Note that the fiber category $F$ is a finite groupoid and has Euler characteristic.
The total category $E$ is equivalent to the Grothendieck construction $\Gr(P^{*})$. By applying Proposition \ref{gr}, we obtain the desired formula:
\[
\chi(E)=\chi(\mathrm{Gr}(P^{*})) = \sum_{b \in \ob(B)} k_{b} \chi(P^{*}b)=\sum_{b \in \ob(B)} k_{b} \chi(F)=\chi(B)\chi(F).
\]
\end{proof}

\begin{corollary}
Let a functor $P : E \to B$ be fibered and cofibered in groupoids between finite categories over $B$ having the connected components $B_{i}$ with the fiber category $F_{i}$. 
If $E$ and each $B_{i}$ have Euler characteristics, then 
\[
\chi(E)=\sum_{i}\chi(B_{i})\chi(F_{i}).
\]
\end{corollary}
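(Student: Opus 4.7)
The plan is to reduce to Theorem \ref{product_1} by decomposing the total category along the connected components of the base. First I would observe that $E$ splits as a coproduct: since any morphism $f$ in $E$ projects to a morphism $Pf$ in $B$, its source and target must lie in the same connected component $B_i$, so the full subcategories $E_i := P^{-1}(B_i)$ (whose objects are those $e$ with $P(e) \in \ob(B_i)$) satisfy
\[
E = \coprod_{i} E_{i},
\]
and each restriction $P|_{E_{i}} : E_{i} \to B_{i}$ is again fibered and cofibered in groupoids, with fiber category $F_i$ and connected base $B_i$.

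The next step is to check that each $E_{i}$ has Euler characteristic, which is what is needed to invoke Theorem \ref{product_1}. Because of the coproduct decomposition, the similarity matrix $\zeta_{E}$ is block diagonal with blocks $\zeta_{E_{i}}$. Hence any weighting $k^{*}$ (respectively coweighting $k_{*}$) on $\zeta_{E}$ restricts on each block to a weighting (respectively coweighting) on $\zeta_{E_{i}}$, so the hypothesis that $E$ has Euler characteristic passes to each $E_{i}$.

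Now I apply Theorem \ref{product_1} to each $P|_{E_{i}} : E_{i} \to B_{i}$, giving $\chi(E_{i}) = \chi(B_{i}) \chi(F_{i})$, and then use Proposition \ref{coproduct}(1) to assemble the pieces:
\[
\chi(E) = \sum_{i} \chi(E_{i}) = \sum_{i} \chi(B_{i}) \chi(F_{i}).
\]

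The only mildly subtle point is the second step, namely transferring the existence of a weighting and coweighting from $E$ to each $E_{i}$; everything else is a direct application of results already established. No single step is truly an obstacle here — the statement is essentially a bookkeeping corollary obtained by running Theorem \ref{product_1} componentwise and adding the results via the coproduct formula.
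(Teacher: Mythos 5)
Your proof is correct and follows essentially the same route as the paper's: decompose $E$ as $\coprod_i E_i$ with $E_i = P^{-1}(B_i)$, note each restriction is fibered and cofibered in groupoids over a connected base, apply Theorem \ref{product_1} componentwise, and sum via Proposition \ref{coproduct}. Your explicit verification that each $E_i$ inherits an Euler characteristic from $E$ via the block-diagonal structure of $\zeta_E$ is a point the paper leaves implicit, and it is a correct and worthwhile addition.
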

\begin{proof}
Let $E_{i}$ denote the full subcategory of $E$ with the set of objects $P^{-1}(\ob(B))$.
The category $E$ is decomposed as $E = \coprod_{i} E_{i}$.
The restriction $P_{|E_{i}} : E_{i} \to B_{i}$ is also fibered and cofibered in groupoids; hence, the desired formula follows from Theorem \ref{product_1} and Proposition \ref{coproduct}:
\[
\chi(E) = \coprod_{i} \chi(E_{i}) = \coprod_{i} \chi(B_{i})\chi(F_{i}).
\]
\end{proof}


\section{Euler characteristic of bicategories}

This section extends the discussion in Section 2 to the case of bicategories. We need to pay attention to more complicated coherence conditions. 
Before talking about bicategories, we review an essential notion of these called {\em cat-graphs} (\cite{Wol74}, \cite{Lac04}).

\begin{definition}
A {\em cat-graph} $\A$ consists of a set of objects $\ob(\A)$ and a small category $\A(x,y)$ for each pair of objects $x,y$. An object of $\A(x,y)$ is called a {\em 1-morphism} or {\em 1-cell} and is denoted by a single arrow $x \to y$, 
and a morphism of $\A(x,y)(f,g)$ is called a {\em 2-morphism} or {\em 2-cell} and is denoted by a double arrow $f \Rightarrow g$. We call $\A(x,y)$ the {\em category of morphisms}.
\end{definition}

\begin{definition}
A {\em bicategory} $\A$ is a cat-graph equipped with the following data:
\begin{itemize}
\item A composition functor $c : \A(y,z) \times \A(y,x) \to \A(x,z)$ for each triple of objects $x,y,z$.
\item A functor $\ide_{x} : * \to \A(x,x)$ for each object $x$. This can be identified as a 1-morphism $x \to x$.
\item An isomorphism $a_{hgf} : h \circ (g \circ f) \Rightarrow (h \circ g) \circ f$ in $\A(x,w)$ called an {\em associator}, for each composable triple of 1-morphisms $h : z \to w$, $g: y \to z$, and $f : x \to y$.
\item A pair of isomorphisms $l : \ide_{y} \circ f \Rightarrow f$ and $r : f \circ \ide_{x} \Rightarrow f$ in $\A(x,y)$ called  {\em unitors}, for each 1-morphism $f: x \to y$.
\end{itemize}
Therefore, $\A$ is equipped with three types of composition:
\begin{enumerate}
\item The composition of 1-morphisms: $g \circ f=gf : x \to z$ for each $f : x \to y$ and $g : y \to z$.
\item The {\em vertical composition} of 2-morphisms: $\beta \circ \alpha :  f \Rightarrow h$ for each $\alpha : f \Rightarrow g$ and $\beta : g \Rightarrow h$.
\item The {\em horizontal composition} of 2-morphisms: $\alpha * \beta : f' \circ f \Rightarrow g' \circ g$ for each 
$\alpha : f \Rightarrow g : x \to y$ and $\beta : f' \Rightarrow g' : y \to z$. 
\end{enumerate}
These are required to satisfy the coherence condition that makes the following diagrams commute:
\[
\xymatrix{
&  & k(h(gf)) \ar@{=>}[rrd]^{\ide_{k}*a} \ar@{=>}[lld]_{a}& & \\
(kh)(gf)\ar@{=>}[rd]_{a} &&&& k((hg)f) \ar@{=>}[ld]^{a}\\
& ((kh)g)f \ar@{=>}[rr]_{a* \ide_{f}}&& (k(hg))f 
}
\]

\[
\xymatrix{ 
g \circ (\id \circ f ) \ar@{=>}[rr]^{a} \ar@{=>}[rd]_{\ide_{g}*l}& & (g \circ \id) \circ f \ar@{=>}[ld]^{r* \ide_{f}} \\
& g \circ f
}
\]
In particular, a bicategory $\A$ is a 2-category (a category enriched in small categories) when all associators and unitors are identities.
\end{definition}

In the case of 2-categories (more generally, enriched categories), Leinster provided the definition of Euler characteristics (or magnitudes) \cite{Lei13}. 
His definition using weightings and coweightings can be naturally applied to the case of cat-graphs.

\begin{definition}\label{bi_Euler}
A cat-graph $\A$ is called {\em measurable} if it satisfies the following two properties:
\begin{enumerate}
\item The set of objects $\ob(\A)$ is finite.
\item The category of morphisms $\A(x,y)$ is equivalent to a finite category $C$ having Euler characteristic for any $x,y$. We write $\chi(\A(x,y))$ as $\chi(C)$.
\end{enumerate}
For a measurable cat-graph $\A$, the {\em similarity matrix} $\zeta_{\A} :  \ob(\A) \times \ob(\A) \to \mathbb{Q}$ is defined by $\zeta_{\A}(i,j)=\chi(\A(i,j))$.
We say that $\A$ {\em has Euler characteristic} if each $\A(i,j)$ and $\zeta_{\A}$ does, and then we define the {\em Euler characteristic} $\chi(\A)$ as $|\zeta_{\A}|$.
We can define a measurable bicategory and its Euler characteristic for the underlying cat-graph.
\end{definition}

\begin{remark}
The Euler characteristic of a bicategory (or much higher weak category) was introduced in \cite{GNS}.
This is the same definition as ours above.
\end{remark}

Given two cat-graphs $\A$ and $\B$, the (co)product of them is defined by the (co)product of objects and the (co)product of categories of morphisms. We can easily show that the Euler characteristic is compatible with the product and coproduct of the cat-graphs similarly to Proposition 2.4.

\begin{proposition}\label{decompose}
Let $\A_{1}, \ldots, \A_{n}$ be measurable cat-graphs having Euler characteristics.
\begin{enumerate}
\item $\chi \left(\coprod_{i=1}^{n}\A_{i} \right) = \sum_{i=1}^{n} \chi(\A_{i})$.
\item $\chi \left(\prod_{i=1}^{n} \A_{i} \right) = \prod_{i=1}^{n} \chi(\A_{i})$.
\end{enumerate}
\end{proposition}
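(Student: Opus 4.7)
The plan is to reduce both identities to matrix-level statements about similarity matrices, following the same skeleton used in the proof of Proposition \ref{coproduct} for ordinary finite categories, and then to upgrade the entries of those matrices from cardinalities of hom-sets to Euler characteristics of hom-categories via Proposition \ref{coproduct}(2) together with the invariance of $\chi$ under equivalence of categories (Proposition \ref{invariant}).

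For part (1), I first note that the cat-graph coproduct $\coprod_{i} \A_{i}$ has $(\coprod_{i} \A_{i})(x,y)$ equal to $\A_{i}(x,y)$ when $x,y$ both belong to $\A_{i}$ and the empty category otherwise, so the similarity matrix $\zeta_{\coprod_{i} \A_{i}}$ is block diagonal with blocks $\zeta_{\A_{i}}$. Assembling weightings $k_{i}^{*}$ and coweightings $k_{*}^{i}$ on each $\zeta_{\A_{i}}$ by concatenation produces a weighting and coweighting on $\zeta_{\coprod_{i} \A_{i}}$, showing that the coproduct has Euler characteristic and that $\chi(\coprod_{i} \A_{i})$ equals the sum over $i$ of the total mass of $k_{i}^{*}$, which is $\sum_{i} \chi(\A_{i})$.

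For part (2), the category of morphisms in the product cat-graph is a genuine product: $(\prod_{i} \A_{i})((a_{1},\dots,a_{n}),(b_{1},\dots,b_{n})) = \prod_{i} \A_{i}(a_{i},b_{i})$. Each factor $\A_{i}(a_{i},b_{i})$ is equivalent to a finite category with Euler characteristic, so the product is equivalent to the product of those finite categories, hence is again measurable with Euler characteristic, and by Proposition \ref{coproduct}(2) and Proposition \ref{invariant} its Euler characteristic equals $\prod_{i} \chi(\A_{i}(a_{i},b_{i}))$. Consequently $\zeta_{\prod_{i} \A_{i}}$ is the tensor (Kronecker) product $\bigotimes_{i} \zeta_{\A_{i}}$. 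If $k_{i}^{*}$ is a weighting on $\zeta_{\A_{i}}$ and $k_{*}^{i}$ a coweighting, then $\bigotimes_{i} k_{i}^{*}$ and $\bigotimes_{i} k_{*}^{i}$ are respectively a weighting and a coweighting on $\bigotimes_{i} \zeta_{\A_{i}}$, since the defining relations factor through the tensor product. Summing the entries of a tensor product of vectors distributes over the product, giving
\[
\chi\!\left(\prod_{i=1}^{n}\A_{i}\right) = \prod_{i=1}^{n}\Bigl(\sum_{j}k_{i}^{*}(j)\Bigr) = \prod_{i=1}^{n}\chi(\A_{i}).
\]

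The only point requiring a little care, rather than a real obstacle, is verifying that the hom-categories of $\prod_{i} \A_{i}$ are still measurable with Euler characteristic; this is settled by combining the equivalence-invariance of $\chi$ (Proposition \ref{invariant}) with the product formula for finite categories (Proposition \ref{coproduct}(2)). Everything else is bookkeeping on block-diagonal and tensor-product matrices.
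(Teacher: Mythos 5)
Your argument is correct and is essentially the paper's own proof written out in full: the paper simply cites Leinster's Proposition 2.6 and Lemma 1.13, which are precisely the block-diagonal decomposition of $\zeta_{\coprod_i \A_i}$ and the Kronecker-product identity for $\zeta_{\prod_i \A_i}$ with tensored (co)weightings that you use. Your extra care in checking that the hom-categories of the product remain measurable via Propositions \ref{invariant} and \ref{coproduct}(2) is a point the paper leaves implicit, but it introduces no new method.
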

\begin{proof}
See Proposition 2.6 and Lemma 1.13 of \cite{Lei08}.
\end{proof}

Even when each $\A_{i}$ is a bicategory, the proposition above holds since the (co)product of bicategories has the (co)product of the underlying cat-graphs.

\subsection{Invariance with respect to biequivalence}

{\em Lax functors} or {\em homomorphisms} are known as morphisms between bicategories.

\begin{definition}\label{lax}
For two bicategories $\A$ and $\B$, a {\em lax functor} $L : \A \to \B$ consists of the following data:
\begin{itemize}
\item A function $L : \ob(\A) \to \ob(\B)$.
\item A functor $L_{xy} : \A(x,y) \to \B(Lx,Ly)$ for each pair of objects $x,y$ in $\A$.
\item A natural transformation $\varphi_{xyz} : c \circ (L \times L) \Rightarrow L \circ c : \A(y,z) \times \A(x,y) \to \B(x,z)$ for each triple of objects $x,y,z$ in $\A$. It yields a 2-morphism $\varphi : Lg \circ Lf \Rightarrow L(g \circ f)$ in $\B(x,z)$.
\item A natural transformation $\psi_{x} : \ide_{Lx} \Rightarrow L \circ \ide_{x}: * \to \B(Lx,Lx)$ for each object $x$ in $\A$. It yields a 2-morphism $\psi : \ide_{Lx} \Rightarrow L(\ide_{x})$ in $\B(Lx,Lx)$.
\end{itemize}
These are required to satisfy the coherence condition that makes the following diagrams commute:
\[
\xymatrix{
(Lh \circ Lg) \circ Lf \ar@{=>}[r]^{\varphi*\ide} \ar@{=>}[d]_{a} & L(h \circ g) \circ Lf \ar@{=>}[r]^{\varphi} & L((h \circ g) \circ f) \ar@{=>}[d]^{La} \\
Lh \circ (Lg \circ Lf) \ar@{=>}[r]_{\ide*\varphi} & Lh \circ L(g \circ f) \ar@{=>}[r]_{\varphi} & L(h \circ (g \circ h)) 
}
\]

\[
\xymatrix{
\ide_{Ly} \circ Lf \ar@{=>}[r]^{\psi *\ide} \ar@{=>}[d]_{l} & L(\ide_{y}) \circ Lf \ar@{=>}[d]^{\varphi} && 
Lf \circ \ide_{Lx} \ar@{=>}[r]^{\ide*\psi} \ar@{=>}[d]_{r} & Lf \circ L(\ide_{x}) \ar@{=>}[d]^{\varphi} \\
Lf & L(\ide_{y} \circ f) \ar@{=>}[l]^{L l} && Lf & L(f \circ \ide_{x}). \ar@{=>}[l]^{L r}
}
\]
The above lax functor $L$ is called a {\em pseudofunctor} if all $\varphi$ and $\psi$ are isomorphisms.
In particular, $L$ is a 2-functor when $\A$ and $\B$ are 2-categories and all $\varphi$ and $\psi$ are identities.
\end{definition}

\begin{example}\label{fiber_functor}
The fiber categories yield a pseudofunctor $P^{*} : B^{\op} \to \Cat$ for a fibered functor $P : E \to B$.
Here, we regard the opposite category $B^{\op}$ as a 2-category with only trivial 2-morphisms and $\Cat$ as a 2-category consisting of small categories, functors, and natural transformations.
$P^{*}$ sends an object $b$ to the fiber category $P^{-1}(b)$ and a morphism $f : b \to b'$ to the functor $f^{*} : P^{-1}(b') \to P^{-1}(b)$ induced by pullbacks and cartesian lifts in Definition \ref{fibered_functor}. Note that we have to choose a cartesian lift $\tilde{f} : f^{*}e \to e$ for each object $e$ in $P^{-1}(b')$ to define $f^{*}$, since it is not determined uniquely.
However, it is unique up to isomorphism; hence, $P^{*}$ satisfies the axiom of lax functors.
\end{example}

The notion of {\em cleavages} helps to define $f^{*}$ above.
A {\em cleavage} $\varphi$ for a fibered functor $P : E \to B$ is a map $\varphi(f,e) = \tilde{f} : f^{*}e \to e$ giving a cartesian lift of $f$ at $e$. A fibered functor together with a cleavage is called {\em cloven}.    
For a cloven fibered functor, we can naturally define the functor $f^{*}$ in Example \ref{fiber_functor}.
Every fibered functor has a cleavage; thus, we regard all fibered functors as a cloven for simplicity.

\begin{definition}
A 1-morphism $f : x \to y$ in a bicategory $\A$ is called an {\em equivalence} if there exists $g : y \to x$ such that $g \circ f \cong \ide_{x}$ and $f \circ g \cong \ide_{y}$ in $\A$. 
We write $x \simeq y$ for objects $x,y$ if there exists an equivalence between them. 
We say that $\A$ is a {\em pseudogroupoid} when every 1-morphism is an equivalence and every 2-morphism is an isomorphism. 
\end{definition}

\begin{remark}\label{connectivity}
If two objects $a,b$ are equivalent in a bicategory $\A$, the category of morphisms 
$\A(a,c)$ and $\A(b,c)$ ($\A(c,a)$ and $\A(c,b)$) are equivalent to each other for any object $c$. Therefore, each category of morphisms of a connected pseudogroupoid $\G$ is equivalent to the groupoid $\G(x,x)$ for an object $x$ in $\G$. Here, a bicategory is {\em connected} if any two objects $x,y$ are combined by a zigzag sequence of 1-morphisms $x \rightarrow x_{1} \leftarrow \ldots \rightarrow x_{n} \rightarrow y$. Every bicategory $\A$ can be uniquely decomposed as $\A=\coprod_{i} \A_{i}$ for connected bicategories $\A_{i}$.
\end{remark}

\begin{definition}
Let $L : \A \to \B$ be a lax functor.
\begin{enumerate}
\item $L$ is called {\em biessentially surjective} on objects if for any object $b \in \ob(\B)$ there exists $a \in \ob(\A)$ such that $La \simeq b$ in $\B$. 
\item $L$ is called a {\em local equivalence} if the functor on the categories of morphisms $\A(x,y) \to \B(fx,fy)$ is an equivalence of categories for each $x,y \in \ob(\A)$.
\item $L$ is called a {\em biequivalence} if it is biessentially surjective on objects and a local equivalence.
\end{enumerate}
\end{definition}

An equivalence of categories $F : A \to B$ if and only if there exists an inverse functor $G : B \to A$ such that $G \circ F \cong \ide_{A}$ and $F \circ G \cong \ide_{B}$.
Similarly, a lax functor $L : \A \to \B$ is a biequivalence if and only if there exists an inverse lax functor $K : \B \to \A$ such that $K \circ L \simeq \ide_{\A}$ and $L \circ K \simeq \ide_{\B}$ in the functor bicategories (see \cite{Lei}). Especially, $K$ is also a biequivalence; hence, biequivalence is an equivalence relation on bicategories. In this case, we say that $\A$ and $\B$ are biequivalent to each other.

The Euler characteristic of bicategories is an invariant with respect to biequivalence.
This is a generalization of Leinster's result (Proposition 2.4 in \cite{Lei08}).
It can be easily verified by replacing isomorphisms with equivalences in his proof, as also mentioned in Theorem 2.22 of \cite{GNS}.

\begin{theorem}\label{biequ_Euler}
If measurable bicategories $\A$ and $\B$ are biequivalent to each other, then $\A$ has Euler characteristic if and only if $\B$ does, and in that case, $\chi(\A) = \chi(\B)$.
\end{theorem}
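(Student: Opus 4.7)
The plan is to adapt Leinster's proof of Proposition \ref{invariant} to the bicategorical setting, substituting equivalence for isomorphism throughout. The essential observation is that the similarity matrix is constant on equivalence classes of objects: if $x \simeq x'$ in $\A$, then $\A(x,y) \simeq \A(x',y)$ as categories for every $y$ (Remark \ref{connectivity}), so $\chi(\A(x,y)) = \chi(\A(x',y))$ by Proposition \ref{invariant} applied through the finite representatives supplied by measurability. The dual statement holds in the second variable, so rows and columns of $\zeta_\A$ indexed by $\simeq$-equivalent objects coincide.

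Given a biequivalence $L : \A \to \B$, local equivalence yields $\zeta_\A(x,y) = \zeta_\B(Lx, Ly)$ for all $x, y \in \ob(\A)$, and biessential surjectivity ensures every $\simeq$-class in $\ob(\B)$ meets the image of $L$. Suppose $k^{*}$ is a weighting on $\zeta_\A$. Partition $\ob(\B)$ into equivalence classes $C_1, \ldots, C_m$, and for each $C_i$ choose a representative $b_i \in C_i$ lying in the image of $L$. Define
\[
k^{*}_\B(b) = \begin{cases} \sum_{a \in L^{-1}(C_i)} k^{*}(a) & \text{if } b = b_i, \\ 0 & \text{otherwise}.\end{cases}
\]
To verify $\zeta_\B k^{*}_\B = u_{\ob(\B)}$, fix $b \in \ob(\B)$ and choose $x \in \ob(\A)$ with $Lx \simeq b$. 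Expanding $(\zeta_\B k^{*}_\B)(b)$ and using row- and column-constancy on equivalence classes to replace each $\zeta_\B(b, b_i)$ by $\zeta_\B(Lx, La) = \zeta_\A(x, a)$ for $a \in L^{-1}(C_i)$ collapses the double sum over $i$ and $a$ to the single sum $(\zeta_\A k^{*})(x) = 1$. A dual construction yields a coweighting on $\zeta_\B$ from one on $\zeta_\A$, and the identity $\sum_{b} k^{*}_\B(b) = \sum_{a} k^{*}(a)$ gives $\chi(\B) = \chi(\A)$.

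The reverse implication follows by symmetry: every biequivalence $L$ admits an inverse biequivalence $K : \B \to \A$, so the same construction transfers a weighting on $\zeta_\B$ to one on $\zeta_\A$. The main obstacle is purely bookkeeping, since $L$ is neither injective nor surjective on objects and one must compress entire preimage classes onto a single representative when transporting weights. The row- and column-constancy of $\zeta$ on equivalence classes, established in the first paragraph, is precisely what permits this compression without disturbing the weighting equation.
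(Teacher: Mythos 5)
Your proposal is correct and follows essentially the same route as the paper's proof: both rest on the constancy of the similarity matrix on equivalence classes of objects together with $\zeta_{\A}(x,y)=\zeta_{\B}(Lx,Ly)$, exactly as in Leinster's argument for Proposition \ref{invariant}. The only cosmetic difference is the direction of transfer---you push a weighting forward along $L$ by concentrating the total weight of each preimage class on a chosen representative, whereas the paper pulls a weighting back from $\B$ to $\A$ by averaging over each equivalence class---but the verification is the same computation.
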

\begin{proof}
Let $L : \A \to \B$ be a biequivalence, and let $l_*$ be a weighting on $\B$.
We denote the equivalence class represented by $x$ as $[x]=\{ y  \mid y \simeq x\}$.
Define $k^{a} = \left( \sum_{b \in [La]} l^{b} \right) /[a]^{\sharp}$, where $[a]^{\sharp}$ is the cardinality of $[a]$. We will show that this is a weighting on $\A$.

If we choose objects $a_{1}, \cdots, a_{m}$ in $\A$ such that $\ob(\A)= \bigcup_{i=1}^{m} [a_{i}]$, then $\ob(\B)=\bigcup_{i=1}^{m}[La_{i}]$ by the biessentially surjectivity of $L$. The desired result follows from the direct calculation:
\begin{align*}
\sum_{a \in \ob(\A)} \zeta_{\A}(a',a)k^{a} &= \sum_{i=1}^{m} \sum_{x \in [a_{i}]} \zeta_{\A}(a',x) k^{a} \\
&= \sum_{i=1}^{m} [a_{i}]^{\sharp} \zeta_{\A}(a',a_{i}) k^{a} \\
&= \sum_{i=1}^{m} \sum_{b \in [La_{i}]} \zeta_{\A}(a',a_{i}) l^{b} \\
&= \sum_{i=1}^{m} \sum_{b \in [La_{i}]} \zeta_{\B}(La',La_{i}) l^{b} \\
&= \sum_{b \in \ob(\B)} \zeta_{\B}(La',b) l^{b}=1.
\end{align*}
Similarly, we can also show the case of coweightings.
In this case, 
\begin{align*}
\chi(\A) = \sum_{a \in \ob(\A)} k^{a} &= \sum_{a \in \ob(\A)} \left(\left( \sum_{b \in [La]} l^{b} \right) /[a]^{\sharp} \right) \\
&=\sum_{i=1}^{m} \sum_{a \in [a_{i}]} \left(\left( \sum_{b \in [La]} l^{b} \right) /[a]^{\sharp} \right) \\
&= \sum_{i=1}^{m} \left( \sum_{b \in [La_{i}]} l^{b} \right) = \sum_{b \in \ob(\B)} l^{b} = \chi(\B).
\end{align*}
\end{proof}

For a bicategory $\A$, there exists a 2-category $S\A$ that is biequivalent to $\A$.
This is known as the {\em coherence theorem} for bicategories. See, for example, Section 2.3 in Leinster's paper \cite{Lei} using the Yoneda embedding, or Lack's paper \cite{Lac04} in terms of model structure.
The 2-category $S\A$ is called the {\em strictification} of $\A$.

\begin{definition}\label{SA}
A bicategory $\A$ is called {\em acyclic} if each category of morphisms of $\A$ is acyclic, and $\A(x,y) = \emptyset$ if $\A(y,x) \neq \emptyset$ when $x \neq y$ and $\A(x,x)$ is equivalent to the trivial category consisting of a single object and the identity. 
\end{definition}

By the definition above, acyclicity is preserved by biequivalences, but measurability is not.
Even if a measurable bicategory $\A$ is biequivalent to $\B$, there is no assurance that the set of objects $\ob(\B)$ is finite. The following lemma examines the case wherein both the conditions acyclicity and measurability are satisfied at the same time.

\begin{lemma}\label{strict}
Let two bicategories $\A$ and $\B$ be biequivalent to each other.
If $\A$ is measurable and acyclic, then so is $\B$.
\end{lemma}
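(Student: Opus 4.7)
The plan is to verify each clause in the definition of ``measurable and acyclic'' for $\B$ by transporting the corresponding condition from $\A$ through the biequivalence. Let $L : \A \to \B$ be the given biequivalence, and let $K : \B \to \A$ be a pseudo-inverse (itself a biequivalence) as recalled just above Theorem \ref{biequ_Euler}; both $L$ and $K$ are biessentially surjective on objects and local equivalences on hom categories.

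First I would handle the local conditions on hom categories. For any $x, y \in \ob(\B)$, the functor $K_{xy} : \B(x, y) \to \A(Kx, Ky)$ is an equivalence of categories. Since $\A$ is measurable and acyclic, $\A(Kx, Ky)$ is acyclic, equivalent to a finite category having Euler characteristic, and (when $x = y$) equivalent to the trivial category. Each of these properties is stable under equivalence of categories, with Proposition \ref{invariant} delivering the invariance of Euler characteristic, so they all transfer to $\B(x,y)$. This yields the hom-category parts of both measurability and acyclicity for $\B$.

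Next I would address the global conditions: the antisymmetry $\B(x,y) = \emptyset$ when $\B(y,x) \neq \emptyset$ and $x \neq y$, and the finiteness of $\ob(\B)$. Suppose $x \neq y$ in $\ob(\B)$ with both $\B(x,y)$ and $\B(y,x)$ nonempty, and pick 1-morphisms $f : x \to y$ and $g : y \to x$. The composites $gf$ and $fg$ lie in endomorphism categories that are equivalent to the trivial category by the previous step, so $gf \cong \ide_x$ and $fg \cong \ide_y$, producing an equivalence $x \simeq y$ in $\B$. Transporting through $K$ yields $Kx \simeq Ky$ in $\A$, and acyclicity of $\A$ then forces $Kx = Ky$. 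For finiteness, biessential surjectivity of $L$ presents $\ob(\B)$ as the union of the equivalence classes $[La]$ indexed by the finite set $\ob(\A)$.

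The main obstacle is to upgrade these up-to-equivalence conclusions into the strict statements required by the definition. One must rule out distinct objects of $\B$ cohabiting a single equivalence class $[La]$, in order to extract both strict antisymmetry and strict finiteness of $\ob(\B)$ from the bicategorical data. This is the step where the interplay of measurability and acyclicity of $\A$ must be used in tandem: acyclicity alone gives the trivial endomorphism categories and the collapse of equivalence classes in $\A$, while the finiteness packaged in measurability controls $\ob(\B)$ once the representatives inside each class $[La]$ are pinned down.
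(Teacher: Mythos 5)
Your handling of the hom-categories is fine and agrees with what the paper takes for granted, but the proposal stops exactly where the proof has to happen, so there is a genuine gap. For the antisymmetry clause you derive $x \simeq y$ in $\B$ and then push forward along $K$ to get $Kx = Ky$ in $\A$; that is not a contradiction, because nothing forces $K$ to be injective on objects, so this route cannot produce the needed conclusion $x=y$. Likewise your finiteness argument writes $\ob(\B)$ as $\bigcup_{a}[La]$ with $a$ ranging over the finite set $\ob(\A)$, but each class $[La]$ could a priori be infinite, so finiteness does not yet follow. Your closing paragraph names the obstacle (ruling out distinct objects of $\B$ in a single class $[La]$) but supplies no mechanism for it, and the suggested ``interplay of measurability and acyclicity of $\A$'' does not provide one: measurability of $\A$ says nothing about how many objects of $\B$ can occupy one equivalence class.

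The paper resolves this differently. In the discussion preceding the lemma it asserts that acyclicity --- including the antisymmetry clause that $\B(x,y)=\emptyset$ whenever $\B(y,x)\neq\emptyset$ and $x\neq y$ --- is preserved by biequivalence, and the written proof of Lemma \ref{strict} then addresses only the finiteness of $\ob(\B)$: granting acyclicity of $\B$, an equivalence $La\simeq b$ forces $La=b$ (two distinct equivalent objects would have nonempty hom-categories in both directions), so $L$ is surjective on objects and $\ob(\B)$ is finite. That is, the collapse of equivalence classes is extracted from the antisymmetry of $\B$ itself, not by transporting into $\A$ as you attempt. To complete your argument along these lines you must first establish antisymmetry for $\B$ directly, and this is genuinely the delicate point rather than a formality: an indiscrete bicategory on two objects with trivial hom-categories is biequivalent to the terminal bicategory yet fails the antisymmetry clause, so the preservation of acyclicity cannot simply be read off from the definition, and deferring it as your proposal does leaves the essential content of the lemma unproved.
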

\begin{proof}
It suffices to show the finiteness on $\ob(\B)$.
Let $L : \A \to \B$ be a biequivalence. For any object $b \in \ob(\B)$, there exists $a \in \ob(\A)$ such that $La \simeq b$ in $\B$. Because of the acyclicity of $\B$, these must be equal: $Lb=a$.
The function $\ob(\A) \to \ob(\B)$ on objects is surjective, and $\ob(\B)$ is finite.
\end{proof}

Using these facts, we can relate the Euler characteristic of the classifying space of a bicategory to that of the original bicategory.
Although there are several ways to construct the classifying space of a bicategory \cite{CCG10}, we adopt here the one using the {\em geometric nerves}. 
Recall the classifying space of a small category $C$. The {\em nerve} of $C$ is a simplicial set whose $n$-simplices are functors $[n] \to C$, where $[n]$ is the category (poset) described as $0 \to 1 \to \cdots \to n$. The classifying space $BC$ is defined as the geometric realization of the nerve of $C$.

\begin{definition}
Let $\A$ be a bicategory. 
The {\em geometric nerve} of $\A$ is a simplicial set whose $n$-simplices are lax functors $[n] \to \A$. The {\em classifying space} $B \A$ is the geometric realization of the geometric nerve. 
\end{definition}

If $\A$ is a 2-category, there is another construction of the classifying space using topological categories (categories enriched in topological spaces).
Let $\A_{T}$ denote the topological category with $\ob(\A_{T})=\ob(\A)$ and $(\A_{T})(x,y)=B\A(x,y)$. 

\begin{definition}
Let $T$ be a topological category. The {\em nerve} $NT$ of $T$ is a simplicial space given by 
\[
N_{n}T = \coprod_{x_i \in \ob(T)} T(x_{n-1},x_{n}) \times \cdots T(x_{0},x_1).
\]
The {\em classifying space} $BT$ is defined by the geometric realization of the nerve.
\end{definition}

Bullejos and Cegarra \cite{BC03} proved that these two constructions, $B\A$ and $B\A_{T}$, are homotopy equivalent.

\begin{theorem}[Theorem 1 of \cite{BC03}]
For any 2-category $\A$, there is a natural homotopy equivalence $B\A_{T} \to B\A$.
\end{theorem}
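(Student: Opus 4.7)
The plan is to encode both $B\A_T$ and $B\A$ as realizations of a single bisimplicial set $W_{\bullet,\bullet}(\A)$ attached to the 2-category $\A$, and then deduce the homotopy equivalence from general bisimplicial techniques. Set
\[
W_{p,q}(\A) = \coprod_{(x_0,\ldots,x_p)} N_q(\A(x_0,x_1)) \times \cdots \times N_q(\A(x_{p-1},x_p)),
\]
with horizontal structure induced by composition of 1-morphisms and the strict associativity of $\A$, and vertical structure inherited from the nerves of the hom-categories. For fixed $p$, realization in $q$ recovers $\coprod B\A(x_0,x_1) \times \cdots \times B\A(x_{p-1},x_p) = N_p\A_T$, so the diagonal realization of $W_{\bullet,\bullet}(\A)$ is naturally homotopy equivalent to $B\A_T$, since the realization of a simplicial space agrees with the realization of its diagonal bisimplicial set up to homotopy.

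Next, I would construct a natural simplicial map $\Phi\colon \mathrm{diag}\, W_{\bullet,\bullet}(\A) \to B_{\bullet}\A$ into the geometric nerve. A diagonal $n$-simplex of $W$ supplies a chain $x_0 \to x_1 \to \cdots \to x_n$ of 1-morphisms together with, for each consecutive pair $(x_{i-1},x_i)$, an $n$-simplex of $N\A(x_{i-1},x_i)$. From this data one must produce a lax functor $[n] \to \A$: the 1-morphism $f_{ij}\colon x_i \to x_j$ assigned to $i<j$ is the composite of the terminal row of 1-morphisms along the segment $x_i \to \cdots \to x_j$, and the structural 2-cell $f_{jk}\circ f_{ij} \Rightarrow f_{ik}$ is read off from the horizontal pasting of the 2-morphism data supplied by the simplex. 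Strict associativity ensures the pentagon and triangle axioms for a lax functor hold on the nose.

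The last step is to show that $|\Phi|$ is a weak equivalence. I would apply a Bousfield--Friedlander style argument based on the skeletal filtration of $B\A$ by the number of distinct vertices: for each tuple $(x_0,\ldots,x_n)$, verify that the preimage of the corresponding stratum is contractible, using a deformation retraction built from the strict unitors of $\A$ that collapses chains of 2-morphisms to their endpoints. Assembling these local contractions over the filtration yields the global weak equivalence.

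The main obstacle is the combinatorial bookkeeping in the definition of $\Phi$: although strictness of $\A$ removes explicit associators, the inner face maps $d_i$ for $0<i<n$ collapse adjacent 1-morphisms via horizontal composition of the associated 2-morphism chains, and verifying that the resulting assignments satisfy the lax-functor coherence in the target demands a careful pasting-diagram computation. This is precisely where being a strict 2-category rather than a general bicategory is essential, and where the subtlety of the Bullejos--Cegarra construction lies.
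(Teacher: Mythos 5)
First, note that the paper does not prove this statement at all: it is quoted verbatim as Theorem 1 of Bullejos--Cegarra \cite{BC03} and used as a black box, so there is no internal proof to compare against. Judged on its own merits, your proposal has the right architecture for the actual \cite{BC03} argument --- form the double nerve $W_{p,q}(\A)$, identify its realization with $B\A_T$ via the diagonal, and compare the diagonal with the geometric nerve --- but the decisive step is missing. Your map $\Phi$ from $\mathrm{diag}\,W_{\bullet,\bullet}(\A)$ to the geometric nerve is essentially the canonical map from the diagonal of a bisimplicial set to its Artin--Mazur codiagonal $\overline{W}$ (the geometric nerve of a 2-category is naturally isomorphic to $\overline{W}$ of the double nerve, and writing the structural 2-cells $f_{jk}\circ f_{ij}\Rightarrow f_{ik}$ and checking the cocycle condition is exactly the $\overline{W}$ bookkeeping you defer). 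That identification is what you should make explicit, because it reduces the theorem to a general simplicial fact.

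The genuine gap is in the last step. Showing that a simplicial map has contractible preimages of ``strata'' does not imply it is a weak equivalence, and the Bousfield--Friedlander theorem (a $\pi_*$-Kan condition for homotopy pullbacks of bisimplicial sets) is not the relevant tool; moreover the geometric nerve of a 2-category is not the nerve of a category, so the filtration ``by number of distinct vertices'' does not give the CW-stratification your argument presupposes. The standard way to close this gap is the theorem (Zisman; Cegarra--Remedios) that for \emph{every} bisimplicial set $X$ the natural map $\mathrm{diag}\,X\to \overline{W}X$ is a weak equivalence; this is proved by induction on skeleta, reducing via the gluing lemma to the generating bisimplices $\Delta[p]\boxtimes\Delta[q]$, where both sides are contractible. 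With that input, your first two paragraphs assemble into a correct proof; without it, the argument as written does not establish that $|\Phi|$ is a homotopy equivalence.
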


In the case of topological categories, we can consider the notions similar to bicategories.
A topological category $T$ is called {\em acyclic} if $T(x,y) = \emptyset$ and $T(y,x)=\emptyset$ when $x \neq y$, and $T(x,x)$ is the trivial category consisting of a single point. Moreover, $T$ is called {\em measurable} if $\ob(T)$ is finite and each space of morphisms has the homotopy type of a CW complex.
A measurable topological category $T$ has the {\em similarity matrix} $\zeta_{T} : \ob(T) \times \ob(T) \to \mathbb{Q}$ defined by $\zeta_{T}(x,y)=\chi(T(x,y))$ using the topological Euler characteristic.
If $T$ is measurable and acyclic, the classifying space $BT$ has the homotopy type of a CW complex.
The authors have proven the relationship between the classifying spaces of topological categories and the Euler characteristics \cite{NT16}.

\begin{proposition}[Theorem 4.17 of \cite{NT16}]\label{topological}
If $T$ is a measurable and acyclic topological category, then it has Euler characteristic and $\chi(T)=\chi(BT)$.
\end{proposition}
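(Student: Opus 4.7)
The plan is to verify that $T$ has Euler characteristic using acyclicity, and then to compute both $\chi(T)$ and $\chi(BT)$ as the same alternating sum indexed by strictly increasing chains of objects.

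First I would use acyclicity to introduce the partial order on $\ob(T)$ defined by $x \leq y$ iff $T(x,y) \neq \emptyset$. Reflexivity follows from $T(x,x) = *$, antisymmetry from the acyclicity clause forbidding morphisms in both directions between distinct objects, and transitivity from composition. Listing the objects in a topological sort makes $\zeta_{T}$ upper triangular with every diagonal entry equal to $\chi(T(x,x)) = 1$, hence invertible; by Remark \ref{regular}, $T$ has Euler characteristic. Writing $\zeta_{T} = I + N$ with $N$ strictly upper triangular and therefore nilpotent, the finite expansion $\zeta_{T}^{-1} = \sum_{n \geq 0}(-1)^{n} N^{n}$ yields
\[
\chi(T) = \sum_{n \geq 0}(-1)^{n} \sum_{x_{0} < x_{1} < \cdots < x_{n}} \prod_{i=0}^{n-1} \chi(T(x_{i}, x_{i+1})),
\]
the sum terminating because $\ob(T)$ is finite and its poset has no infinite chain.

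Next I would unwind the classifying space via the nerve. A simplex $(f_{1}, \ldots, f_{n}) \in N_{n}T$ is non-degenerate precisely when no $f_{i}$ is an identity, and by acyclicity this forces the underlying vertex string to be a strictly increasing chain $x_{0} < x_{1} < \cdots < x_{n}$ whose non-degenerate stratum is then the entire product $\prod_{i=0}^{n-1} T(x_{i}, x_{i+1})$. Measurability equips each $T(x,y)$ with a CW structure; once one checks that $NT$ is a good simplicial space (every degeneracy is a closed cofibration), $BT = |NT|$ acquires a filtration in which the $n$-th stage is obtained from the $(n-1)$-th by attaching $N_{n}^{\mathrm{nd}}T \times \Delta^{n}$ along $N_{n}^{\mathrm{nd}}T \times \partial \Delta^{n}$, contributing $(-1)^{n}\chi(N_{n}^{\mathrm{nd}}T)$ to $\chi(BT)$. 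Multiplicativity of the Euler characteristic on products of CW complexes then yields
\[
\chi(BT) = \sum_{n \geq 0}(-1)^{n} \sum_{x_{0} < x_{1} < \cdots < x_{n}} \prod_{i=0}^{n-1} \chi(T(x_{i}, x_{i+1})),
\]
which matches the formula for $\chi(T)$ term by term.

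The principal obstacle is the point-set step in the second paragraph: one must confirm that the Euler characteristic of $|NT|$ is indeed the alternating sum over the non-degenerate simplex spaces. This is exactly what the CW hypothesis in measurability is designed for, and it may be handled by comparing with the fat realization $\|NT\|$ and invoking the standard equivalence $\|NT\| \simeq |NT|$ for good simplicial spaces whose component spaces are CW complexes. Once this technicality is settled the two displayed expressions coincide and the equality $\chi(T) = \chi(BT)$ follows.
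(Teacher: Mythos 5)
The paper does not actually prove this proposition: it is imported verbatim as Theorem 4.17 of \cite{NT16}, so there is no internal argument to compare against. Your proof is correct and self-contained, and it follows the natural route: acyclicity makes $\zeta_{T}$ unitriangular in a topological sort, so Remark \ref{regular} gives existence, and the Neumann series for $(I+N)^{-1}$ expresses $\chi(T)$ as the alternating sum over strictly increasing chains, which is then matched against the skeletal computation of $\chi(BT)$. Three remarks. First, you silently correct the paper's definition of an acyclic topological category (read literally, ``$T(x,y)=\emptyset$ and $T(y,x)=\emptyset$ when $x\neq y$'' would make $T$ a disjoint union of points); your reading --- no morphisms in both directions between distinct objects --- is the intended one, matching Definition \ref{SA}. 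Second, the point-set technicality you flag at the end is genuinely easier here than for a general good simplicial space: because $T(x,x)$ is a single point, $N_{n}T$ is a disjoint union of clopen components indexed by weakly increasing chains $x_{0}\leq\cdots\leq x_{n}$, and the degenerate locus is precisely the union of the components whose chain repeats an object. Hence each degeneracy is the inclusion of a clopen summand, the latching object splits off, and the pushout attaching $N_{n}^{\mathrm{nd}}T\times\Delta^{n}$ along $N_{n}^{\mathrm{nd}}T\times\partial\Delta^{n}$ holds on the nose; no comparison with the fat realization is needed. Third, you should make explicit that each $T(x,y)$ has the homotopy type of a \emph{finite} CW complex, since otherwise neither $\zeta_{T}$ nor your alternating sums are defined; the paper assumes this implicitly, and it holds in the only place the proposition is used, Lemma \ref{2-category}, where the morphism spaces are classifying spaces of finite acyclic categories.
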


\begin{lemma}\label{2-category}
If $\A$ is a measurable and acyclic 2-category, then it has Euler characteristic and $\chi(\A)=\chi(B\A)$.
\end{lemma}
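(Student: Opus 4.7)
The natural plan is to pass from the $2$-category $\A$ to the associated topological category $\A_{T}$ (with the same objects and with morphism space $B\A(x,y)$), apply the topological analogue Proposition \ref{topological}, and then pull the result back to $\A$ via the Bullejos--Cegarra homotopy equivalence $B\A_{T}\simeq B\A$. The diagram one is trying to fill in is
\[
\chi(\A)\;=\;\chi(\A_{T})\;=\;\chi(B\A_{T})\;=\;\chi(B\A),
\]
where the middle equality is Proposition \ref{topological} and the last one is the Bullejos--Cegarra theorem.

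First I would verify the hypotheses of Proposition \ref{topological} for $\A_{T}$. Measurability is the easy half: $\ob(\A_{T})=\ob(\A)$ is finite, and for each pair $x,y$ the category $\A(x,y)$ is equivalent to a finite category $C_{xy}$, so $B\A(x,y)\simeq BC_{xy}$ has the homotopy type of a finite CW complex. Acyclicity of $\A_{T}$ reduces to two checks: when $x\neq y$, $B\A(x,y)=\emptyset$ iff $\A(x,y)=\emptyset$, so the vanishing condition is inherited directly from the acyclicity of $\A$; and when $x=y$, $\A(x,x)$ is equivalent to the trivial category, hence $B\A(x,x)$ is contractible, which is what one needs for the topological notion.

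Next I would match the two similarity matrices. For each pair $x,y$ choose a skeleton $C_{xy}$ of $\A(x,y)$; since the skeleton is invariant under equivalence, $C_{xy}$ is a finite acyclic category in Leinster's sense. Proposition \ref{invariant} then gives $\chi(\A(x,y))=\chi(C_{xy})$, and Proposition \ref{classify} together with $BC_{xy}\simeq B\A(x,y)$ gives $\chi(C_{xy})=\chi(BC_{xy})=\chi(B\A(x,y))$. Therefore
\[
\zeta_{\A}(x,y)\;=\;\chi(\A(x,y))\;=\;\chi(B\A(x,y))\;=\;\zeta_{\A_{T}}(x,y),
\]
so $\A$ and $\A_{T}$ share the same similarity matrix. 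Combined with the preceding paragraph this shows $\A$ has Euler characteristic and $\chi(\A)=\chi(\A_{T})$. Composing with Proposition \ref{topological} and the Bullejos--Cegarra equivalence finishes the proof.

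The main obstacle I anticipate is the second clause of acyclicity at the diagonal: Definition \ref{SA} only requires $\A(x,x)$ to be \emph{equivalent} to the trivial category, not literally equal to it, so $\A(x,x)$ could contain non-identity (iso)morphisms and $\A_{T}(x,x)=B\A(x,x)$ may not be a single point. One has to argue that the relevant topological notion of acyclicity in Proposition \ref{topological} is robust enough to accept a contractible morphism space at the diagonal (since the topological Euler characteristic only sees the homotopy type); alternatively, one can first replace $\A$ by a biequivalent $2$-category whose diagonal hom-categories are strictly trivial, invoke Theorem \ref{biequ_Euler} to keep $\chi(\A)$ unchanged, and then apply the argument above cleanly.
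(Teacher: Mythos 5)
Your proof follows essentially the same route as the paper: it passes to the topological category $\A_{T}$, checks that $\zeta_{\A}=\zeta_{\A_{T}}$, and concludes via Proposition \ref{topological} together with the Bullejos--Cegarra equivalence $B\A_{T}\simeq B\A$. The only differences are that you obtain the existence of the Euler characteristic of $\A$ from the shared similarity matrix rather than from triangularity of $\zeta_{\A}$ and Remark \ref{regular} as the paper does, and that you explicitly address the diagonal hom-categories being merely equivalent (not equal) to the trivial category --- a point the paper glosses over and for which your proposed remedies are sound.
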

\begin{proof} 
We can take $\zeta_{\A}$ as a triangular matrix; thus, it is regular and $\A$ has Euler characteristic based on Remark \ref{regular}.
Each space of morphisms of $\B\A$ has the homotopy type of a finite CW complex (simplicial complex).
We can apply Theorem \ref{topological} and obtain the equality $\chi(\A_{T})=\chi(B\A_{T})$, which implies the desired formula:
\[
\chi(\A) = \chi(\A_{T})=\chi(B\A_{T})=\chi(B\A).
\]
Here, we used the facts that $\zeta_{\A}=\zeta_{\A_{T}}$ and $B\A_{T} \simeq B\A$.
\end{proof}

\begin{theorem}\label{classify_bi}
If $\A$ is a measurable and acyclic bicategory, then it has Euler characteristic and $\chi(\A)=\chi(B\A)$.
\end{theorem}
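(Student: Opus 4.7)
The plan is to reduce the statement to the 2-category case already handled in Lemma \ref{2-category} by strictifying $\A$. Concretely, invoke the coherence theorem to obtain a 2-category $S\A$ together with a biequivalence $\A \simeq S\A$. By Lemma \ref{strict}, measurability and acyclicity are both inherited by $S\A$, so $S\A$ is a measurable and acyclic 2-category. Lemma \ref{2-category} then applies, giving $\chi(S\A) = \chi(BS\A)$, and Theorem \ref{biequ_Euler} yields $\chi(\A) = \chi(S\A)$.

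The remaining ingredient is that the classifying space is a biequivalence invariant, i.e.\ $B\A \simeq BS\A$. The nerve used in the definition of $B\A$ is the geometric nerve (lax functors $[n] \to \A$), and it is a standard fact (going back to work of Carrasco--Cegarra--Garz\'on, cited in the introduction via \cite{CCG10}) that a biequivalence of bicategories induces a weak homotopy equivalence between geometric nerves, hence a homotopy equivalence of classifying spaces. Granting this, we conclude
\[
\chi(\A) = \chi(S\A) = \chi(BS\A) = \chi(B\A),
\]
and in particular $\A$ has Euler characteristic.

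The main obstacle is the last step, namely justifying $\chi(B\A) = \chi(BS\A)$. There are two natural ways to phrase it: either by appealing directly to the classifying space invariance of biequivalences, or by first identifying $B\A$ with $B S\A$ via the strictification functor on geometric nerves. The argument should be short because all the real work — existence of Euler characteristic, the classifying space identity for 2-categories, invariance under biequivalence, and preservation of the two finiteness hypotheses — has already been established in the preceding lemmas.
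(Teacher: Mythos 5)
Your proposal matches the paper's proof essentially step for step: strictify via the coherence theorem, transfer measurability and acyclicity with Lemma \ref{strict}, apply Lemma \ref{2-category} to $S\A$, use biequivalence invariance of $\chi$, and conclude via the fact that a biequivalence induces a homotopy equivalence $B\A \simeq BS\A$ (which the paper asserts with the same brevity you flag). No substantive difference.
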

\begin{proof}
By Lemma \ref{strict}, the strictification $S\A$ is measurable and acyclic.
A biequivalence $\A \to S\A$ induces a homotopy equivalence $B\A \to BS\A$ on the classifying spaces.
Lemma \ref{2-category} concludes the result:
\[
\chi(\A) = \chi(S\A) = \chi(BS\A)=\chi(B\A).
\]
\end{proof}

\subsection{Product formula for fibrations}

Next, we focus on the behavior of the Euler characteristic of a bicategory with respect to fibrations.
As we have seen in Section 2, the Euler characteristic of a finite category has a product formula for functors fibered and cofibered in groupoids. A bicategorical version of fibered functors was introduced by Buckley \cite{Buc14} (he called it simply {\em fibration}). We refer the readers to Section 3 of his paper for the details with intelligible diagrams. This section shall describe the definitions and minimum properties necessary to compute Euler characteristics.

\begin{definition}
Let $P : \E \to \B$ be a lax functor. A 1-morphism $f : x \to y$ in $E$ is called {\em cartesian} when it has the following two properties:
\begin{enumerate}
\item For each 1-morphism $g : z \to y$ in $\E$ and $h : Pz \to Px$ in $\B$ with an isomorphism $\alpha :  Pf \circ h \Rightarrow Pg$, 
there exist a 1-morphism $\tilde{h} : z \to x$ in $\E$ and isomorphisms $\tilde{\alpha} : f \circ \tilde{h} \Rightarrow g$ and $\tilde{\beta} : P\tilde{h} \Rightarrow h$ such that $\alpha *(Pf \beta) =P\alpha * \varphi_{hf}$. We say that $(\tilde{h},\tilde{\alpha}, \tilde{\beta})$ is a {\em lift} of $(h,\alpha)$.
\item For a 2-morphism $\sigma : g \Rightarrow g'$ in $\E$ and 1-morphisms $h,h' : Pz \to Px$ in $\B$ with isomorphisms $\alpha : Pf \circ h \Rightarrow Pg$ and $\alpha' : Pf \circ h' \Rightarrow Pg$,  suppose that $(h,\alpha)$ and $(h',\alpha')$ have lifts $(\tilde{h},\tilde{\alpha},\tilde{\beta})$ and $(\tilde{h'},\tilde{\alpha'},\tilde{\beta'})$, respectively. For any 2-morphism $\delta : h \Rightarrow h'$ in $\B$ with $\alpha' * (Pf \delta) = P(\sigma) * \alpha$, there exists a unique 2-morphism $\tilde{\delta} : \tilde{h} \Rightarrow \tilde{h'}$ in $\E$ such that $\tilde{\alpha'} * f \tilde{\delta} = \sigma * \tilde{\alpha}$ and $\delta * \tilde{\beta} = \tilde{\beta'} * P\tilde{\delta}$.
\end{enumerate}
\end{definition}

\begin{definition}
Let $P : \E \to \B$ be a lax functor. A 2-morphism $\alpha : f \Rightarrow g : x \to y$ in $\E$ is called {\em cartesian} if it is cartesian as a morphism in $\E(x,y)$ for the functor $P : \E(x,y) \to \B(Px,Py)$.
We say that $P$ is {\em locally fibered} when $P : \E(x,y) \to \B(Px,Py)$ is fibered for any pair of objects  $x,y$ in $\E$.
\end{definition}

\begin{definition}
A lax functor $P : \E \to \B$ is called {\em fibered} if it satisfies the following conditions:
\begin{enumerate}
\item For any 1-morphism $f : b \to P(e)$ in $\B$, there exists a cartesian 1-morphism $\tilde{f} : e' \to e$ in $\E$ such that $P\tilde{f}=f$.
\item $P$ is locally fibered. 
\item The horizontal composition of any two cartesian 2-morphisms is cartesian.
\end{enumerate}
\end{definition}

Similarly to the case of plane fibered functors, we assume that our fibered lax functor is also equipped with a {\em cleavage}; a cartesian lift is designated for each 1- and 2-morphism.

The following propositions are useful properties with respect to the cartesian morphisms of a fibered lax functor.

\begin{proposition}[Proposition 3.1.12 in \cite{Buc14}]\label{equivalence_preserve}
Let $P : \E \to \B$ be a fibered lax functor and $f$ be a cartesian 1-morphism in $\E$.
If $P(f)$ is an equivalence in $\B$, then so is $f$.
\end{proposition}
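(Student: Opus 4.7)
My plan is to obtain a quasi-inverse of $f$ by lifting a pseudo-inverse of $Pf$ through the cartesian property of $f$, and then promote the resulting one-sided inverse to a two-sided one by comparing cartesian lifts via clause (2) of the cartesian definition.

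First, I would choose a pseudo-inverse $g : Py \to Px$ of $Pf$ in $\B$ together with isomorphisms $\epsilon : Pf \circ g \Rightarrow \id_{Py}$ and $\eta : \id_{Px} \Rightarrow g \circ Pf$. Composing $\epsilon$ with the inverse of the lax unit constraint $\psi_y : \id_{Py} \Rightarrow P(\id_y)$ yields an isomorphism $\alpha : Pf \circ g \Rightarrow P(\id_y)$. Applying clause (1) of the cartesian definition to the data $z := y$, $g_\E := \id_y$, $h := g$, and $\alpha$, I obtain a 1-morphism $\tilde{g} : y \to x$ in $\E$ together with isomorphisms $\tilde{\alpha} : f \circ \tilde{g} \Rightarrow \id_y$ and $\tilde{\beta} : P\tilde{g} \Rightarrow g$. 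Thus $\tilde{g}$ is already a right quasi-inverse of $f$ in $\E$.

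Next, I would build $\tilde{g} \circ f \cong \id_x$ by exhibiting two cartesian lifts of the same datum along $f$. Taking $z = x$, $g = g' = f$, and $\sigma = \id_f$ in clause (2), the trivial lift $(\id_x,\, r : f \circ \id_x \Rightarrow f)$ covers $(\id_{Px},\, r)$, while $(\tilde{g} \circ f,\, \text{composite})$ covers $(P(\tilde{g} \circ f),\, \text{its $P$-image})$, where the composite isomorphism $f \circ (\tilde{g} \circ f) \cong (f \circ \tilde{g}) \circ f \cong \id_y \circ f \cong f$ is assembled from an associator, $\tilde{\alpha}$, and the unitor $l$. The base comparison 2-morphism $\delta : P(\tilde{g} \circ f) \Rightarrow \id_{Px}$ is furnished by the chain $P(\tilde{g} \circ f) \cong P\tilde{g} \circ Pf \cong g \circ Pf \cong \id_{Px}$, using $\varphi^{-1}_{f,\tilde{g}}$, $\tilde{\beta} \ast \id_{Pf}$, and $\eta^{-1}$. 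Clause (2) then produces a unique 2-morphism $\tilde{\delta} : \tilde{g} \circ f \Rightarrow \id_x$; applying the same clause to $\delta^{-1}$ yields an inverse for $\tilde{\delta}$, so $\tilde{\delta}$ is an isomorphism.

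The main obstacle is verifying the compatibility equation $\alpha' \ast (Pf \cdot \delta) = P(\sigma) \ast \alpha$ demanded by clause (2). With $\sigma = \id_f$ this collapses to $\alpha' \ast (Pf \cdot \delta) = \alpha$, which reduces to a coherence diagram involving the triangle identity for the pseudo-inverse pair $(\epsilon, \eta)$, the lax constraints $\varphi$ applied to $(f, \tilde{g} \circ f)$ and $(f, \tilde{g})$, the unit constraint $\psi$, and the identity $\tilde{\alpha}$ coming out of clause (1). A minor preliminary is the invertibility of $\psi_y$: this is automatic in Buckley's setting, where fibered lax functors are (normal) pseudofunctors, so one can in practice take $\psi = \id$ throughout. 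Without normality one would first replace $P$ by a biequivalent normal lax functor, which I would factor into a preliminary lemma rather than fold into the main argument.
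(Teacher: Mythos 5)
This proposition is quoted in the paper from Buckley (Proposition 3.1.12 of \cite{Buc14}) with no proof supplied, so there is nothing in the paper itself to compare against; your argument is essentially the standard one (and Buckley's): lift a pseudo-inverse of $Pf$ through clause (1) of cartesianness to get a right inverse $\tilde{g}$ with $f \circ \tilde{g} \cong \ide_y$, then use the uniqueness in clause (2) to compare the two lifts $\tilde{g} \circ f$ and $\ide_x$ of the ``same'' datum over $Px$ and conclude $\tilde{g} \circ f \cong \ide_x$. The structure is right, and your device of getting invertibility of $\tilde{\delta}$ by running clause (2) against $\delta^{-1}$ and invoking uniqueness is exactly how one should finish.

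One step needs repair. The compatibility equation you must verify before clause (2) applies reduces (after unwinding the lax constraints and the defining equation of the lift from clause (1)) to the zigzag identity $r_{Pf} \circ (\ide_{Pf} * \eta^{-1}) = l_{Pf} \circ (\epsilon * \ide_{Pf}) \circ a$ for the pair $(\eta,\epsilon)$. You cite ``the triangle identity for the pseudo-inverse pair $(\epsilon,\eta)$'' as if it were available, but for an arbitrary choice of $g$, $\epsilon$, $\eta$ witnessing that $Pf$ is an equivalence it is not; you must first replace $\eta$ (or $\epsilon$) so that $(g,\epsilon,\eta)$ is an adjoint equivalence, which is always possible in a bicategory and costs nothing, but has to be said --- otherwise the hypothesis $\alpha' * (Pf\,\delta) = P(\sigma) * \alpha$ of clause (2) simply fails for your $\delta$. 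Two smaller points: the isomorphism $\alpha : Pf \circ g \Rightarrow P(\ide_y)$ is $\psi_y \circ \epsilon$, i.e.\ you post-compose with $\psi_y$ itself, not its inverse (though you do need $\psi_y$ invertible for $\alpha$ to be an isomorphism, which your normality remark correctly supplies); and the ``factoring into a preliminary lemma'' of replacing $P$ by a normal lax functor is unnecessary in Buckley's setting, where the structure constraints of $P$ are already invertible.
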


\begin{proposition}[Proposition 3.2.1 in \cite{Buc14}]\label{lift_iso}
When $P : \E \to \B$ is locally fibered, every lift $(\tilde{h}, \tilde{\alpha}, \tilde{\beta})$ of $(h, \alpha)$ along a cartesian
1-morphism can be chosen so that $\tilde{\beta}= \ide_{h}$. That is, lifts along cartesian 1-morphisms can be chosen so that $P \tilde{h} = h$.
\end{proposition}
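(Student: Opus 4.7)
The plan is to modify a given lift $(\tilde{h},\tilde{\alpha},\tilde{\beta})$ so that its second component becomes the identity, by pulling $\tilde{h}$ back along $\tilde{\beta}^{-1}$ inside the hom-fiber over $(z,x)$. The whole argument takes place inside the fibered functor $P_{zx}:\E(z,x)\to\B(Pz,Px)$ supplied by local fiberedness, together with a short compatibility check.

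First I would apply the fiberedness of $P_{zx}$ to the morphism $\tilde{\beta}^{-1}:h\to P\tilde{h}$ in $\B(Pz,Px)$, whose target is the image of the object $\tilde{h}\in\E(z,x)$: this produces a cartesian morphism $\eta:\tilde{h}'\Rightarrow\tilde{h}$ in $\E(z,x)$ with $P\eta=\tilde{\beta}^{-1}$, and in particular $P\tilde{h}'=h$. Because $\tilde{\beta}^{-1}$ is invertible and $\eta$ is cartesian over it, a standard consequence of the cartesian universal property makes $\eta$ itself invertible (its inverse in the base lifts uniquely and is then forced to be a two-sided inverse upstairs).

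Next I would assemble the candidate new lift by keeping $\tilde{h}'$ as above and setting
\[
\tilde{\alpha}' := \tilde{\alpha}\circ(\ide_f*\eta):\, f\circ\tilde{h}' \Rightarrow g, \qquad \tilde{\beta}' := \ide_h.
\]
Both are isomorphisms and $P\tilde{h}'=h$ by construction, so everything reduces to verifying the coherence equation for the new data, which — since $\tilde{\beta}'=\ide_h$ makes $Pf*\tilde{\beta}'$ an identity — collapses to $\alpha = P\tilde{\alpha}'\circ\varphi_{f,\tilde{h}'}$.

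The final step is a brief diagram chase. I would expand $P\tilde{\alpha}' = P\tilde{\alpha}\circ P(\ide_f*\eta)$ by functoriality of $P_{zy}$, apply naturality of $\varphi$ at the 2-morphism $\eta$ to rewrite $P(\ide_f*\eta)\circ\varphi_{f,\tilde{h}'}$ as $\varphi_{f,\tilde{h}}\circ(\ide_{Pf}*\tilde{\beta}^{-1})$, use the coherence equation for the original lift to replace $P\tilde{\alpha}\circ\varphi_{f,\tilde{h}}$ by $\alpha\circ(Pf*\tilde{\beta})$, and finally collapse $(Pf*\tilde{\beta})\circ(Pf*\tilde{\beta}^{-1})$ to the identity on $Pf\circ h$ by the interchange law. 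The only real obstacle is notational bookkeeping: keeping vertical and horizontal compositions straight and orienting the naturality square of $\varphi$ in the correct direction against $\eta$; the content is genuinely just "lift an iso cartesianly and transport."
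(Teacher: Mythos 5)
Your argument is correct and takes essentially the same route as the source: the paper gives no proof of this statement, merely citing Buckley's Proposition 3.2.1, and your proof---cartesianly lift the isomorphism $\tilde{\beta}^{-1}$ in the hom-fibration $\E(z,x)\to\B(Pz,Px)$ to an invertible $2$-cell $\eta$ with $P\tilde{h}'=h$, replace $\tilde{\alpha}$ by $\tilde{\alpha}\circ(\ide_{f}*\eta)$, and verify the coherence equation via naturality of $\varphi$ at $\eta$ together with interchange---is exactly the argument given there. No gaps.
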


We introduce a bicategorical analog of functors fibered in groupoids as a special case of fibered lax functors.

\begin{definition}
A lax functor $P : \E \to \B$ is called {\em fibered in pseudogroupoids} if it satisfies the following conditions:
\begin{enumerate}
\item Every 1-morphism in $\E$ is cartesian.
\item For any 1-morphism $f : b \to P(e)$ in $\B$, there exists a 1-morphism $\tilde{f} : e' \to e$ in $\E$ such that $P\tilde{f}=f$.
\item $P$ is locally fibered in groupoids.
\end{enumerate}
\end{definition}

In other words, $P : \E \to \B$ is fibered in pseudogroupoids if and only if every $i$-morphism in $\E$ is cartesian and every $i$-morphism in $\B$ has a lift at the target point for $i=1,2$. Obviously, this is a fibered lax functor.

We can define the dual notions above, {\em co-cartesian morphisms}, {\em cofibered lax functors}, and lax functors {\em cofibered in pseudogroupoids}, by reversing the directions of 1- and 2-morphisms.

\begin{definition}
For two lax functors $L, K : \A \to \B$, a lax natural transformation $\sigma : L \Rightarrow K$ consists of the following data:
\begin{itemize}
\item A 1-morphism $\sigma_x : Lx \to Kx$ in $\B$ for each object $x$ in $\A$.
\item A natural transformation $\sigma_{x,y} : (\sigma_{x})^* \circ K \Rightarrow (\sigma_{y})_{*} \circ L: \A(x,y)  \to \B(Lx,Ky)$. It yields a 2-morphism $\sigma_{f} : Kf \circ \sigma_{x} \Rightarrow \sigma_{y} \circ Lf$ in $\B$ for each 1-morphism $f : x \to y$ in $\A$.
\end{itemize}
These are required to make the following diagrams commute:
\[
\xymatrix{
(Kg \circ Kf ) \circ \sigma_{x} \ar@{=>}[r]^{a} \ar@{=>}[d]_{\varphi} & Lg \circ (Kf \circ \sigma_{x}) \ar@{=>}[r]^{\ide * \sigma_{f}} & Kg \circ (\sigma_y \circ Lf) \ar@{=>}[r]^{a^{-1}} & (Kg \circ \sigma_{y}) \circ Lf \ar@{=>}[d]^{\sigma_{g} * \ide} \\
K(g \circ f) \circ \sigma_{x} \ar@{=>}[r]_{\sigma_{gf}} & \sigma_{z} \circ L(g \circ f) \ar@{=>}[r]_{\ide * \varphi^{-1}} & \sigma_{z} \circ (Lg \circ Lf) \ar@{=>}[r]_{a^{-1}} & (\sigma_{z} \circ Lg) \circ Lf
}
\]
\[
\xymatrix{
\ide_{Kx} \circ \sigma_{x} \ar@{=>}[r]^{l} \ar@{=>}[d]_{\psi * \ide} & \sigma_{x} \ar@{=>}[r]^{r^{-1}} & \sigma_{x} \circ \ide_{Lx} \ar@{=>}[d]^{\ide* \phi} \\
K\ide_{x} \circ \sigma_{x} \ar@{=>}[rr]_{\sigma_{\ide}} && \sigma_{x} \circ L \ide_{x}
}
\]
\end{definition}

\begin{definition}\label{fiber_bicategory}
Let $P : \E \to \B$ be a fibered lax functor. The {\em fiber bicategory} $P^{-1}(b)$ over an object $b$ in $\B$ consists of the inverse image by $P$ of $b$, $\ide_{b}$, and $\ide_{\ide_{b}}$ as the objects, 1-morphisms, and 2-morphisms, respectively. Note that this is not a sub-bicategory of $\E$ since a part of the compositions is different from $\E$.
The composition $g \hat{\circ} f$ for a composable pair of 1-morphisms $f,g$ in $P^{-1}(b)$ is defined 
as $\phi^{*}(g \circ f)$: the domain of the cartesian lift $\tilde{\phi}$ of 
\[
\phi=\varphi \circ \theta^{-1} : 1_{b} \Rightarrow 1_{b} \circ 1_{b} =P(g) \circ P(f) \Rightarrow P(g \circ f)
\]
at $g \circ f$.

The vertical composition of 2-morphisms is the same as that of $\E$. However, the horizontal composition $\beta \hat{*} \alpha : f \hat{\circ} 'f \Rightarrow g' \hat{\circ} g$ for a pair of horizontal composable 2-morphisms $\alpha : f \Rightarrow g$ and $\beta : f' \Rightarrow g'$ is defined as the unique 2-morphism in the following left diagram in $\E$ over the right diagram in $\B$:
\[
\xymatrix{
f' \hat{\circ} f \ar@{=>}[r]^-{\tilde{\phi}} \ar@{:>}[d]_{\beta \hat{*} \alpha} & f' \circ f \ar@{=>}[d]^{\beta * \alpha}  &&  \ide_{b} \ar@{=>}[r]^-{\phi} \ar@{=>}[d]_{\ide_{\ide_{b}}} & P(f' \circ f) \ar@{=>}[d]^{P(\beta * \alpha)} \\
g' \hat{\circ} g \ar@{=>}[r]_-{\tilde{\phi}} & g' \circ g && \ide_{b} \ar@{=>}[r]_-{\phi} & P(g' \circ g)
}
\]
Similarly, the identities and coherence isomorphisms are given by the lifting property of $P$.

A 1-morphism $f : b \to b'$ in $\B$ induces a lax functor $f^{*} : P^{-1}(b') \to P^{-1}(b)$ described in the following left diagram in $\E$ over the right diagram in $\B$ (isomorphisms omitted):
\[
\xymatrix{
f^{*}(e) \ar[rr]^{\tilde{f}_{e}} \ar@/_9pt/[d]^{\stackrel{f^{*}\alpha}{\Rightarrow}}_{f^{*}h} \ar@/^9pt/[d]^{f^{*}k} & & e \ar@/_8pt/[d]_{h}^{\stackrel{\alpha}{\Rightarrow}} \ar@/^8pt/[d]^{k} && b \ar[rr]^{f} \ar@/_8pt/[d]^{=}_{\ide_{b}} \ar@/^8pt/[d]^{\ide_{b}} & & b' \ar@/_8pt/[d]_{\ide_{b'}}^{=} \ar@/^8pt/[d]^{\ide_{b'}}\\ 
f^{*}(e') \ar[rr]_{\tilde{f}_{e'}} & & e' && b \ar[rr]_{f} & & b'
}
\]
The lax functor $f^{*}$ sends an object $e$ to the domain $f^{*}e$ of the lift $\tilde{f}_{e}$ and a 1-morphism $h : e \to e'$ to the lift $f^{*}h$ over $\ide_b$ with $\tau_{f} : h \circ \tilde{f}_{e} \cong \tilde{f}_{e'} \circ f^{*}h$ by Proposition \ref{lift_iso}. Furthermore, it sends a 2-morphism $\alpha : h \Rightarrow k$ to the unique lift $f^*\alpha$ over $\ide_{\ide_b}$ that is compatible with the diagram above.
On the other hand, if $P$ is cofibered, the 1-morphism $f : b \to b'$ induces a lax functor $f_{*} : P^{-1}(b) \to P^{-1}(b')$ by reversing the directions of morphisms in the diagram above.

A 2-morphism $\sigma : f \Rightarrow g : b \to b'$ in $\B$ induces a lax natural transformation $\sigma^* : g^* \Rightarrow f^* : P^{-1}(b') \to P^{-1}(b)$ described in the following left diagram in $\E$ over the right diagram in $\B$ (isomorphisms omitted):
\[
\xymatrix{ & f^*e \ar@/^5pt/[dr]^{\tilde{f}_{e}}&  && & b \ar@/^5pt/[dr]^{f}&\\
g^{*}e \ar@/^5pt/[ur]^{\sigma^*_{e}} \ar@/^7pt/[rr]^{g_e}_{\Downarrow \tilde{\sigma}} \ar@/_7pt/[rr]_{\tilde{g}_{e}} & & e && b \ar@/^5pt/[ur]^{\ide} \ar@/^7pt/[rr]^{f}_{\Downarrow \sigma} \ar@/_7pt/[rr]_{g} & & b'
}
\]
The 2-morphism $\tilde{\sigma}$ denotes a lift of $\sigma$ ending at $\tilde{g}_{e}$, and $g_e$ denotes its domain for an object $e$ in $P^{-1}(b')$. There exists a 1-morphism $\sigma_{e}^*$ with an isomorphism $\widetilde{\ide}_{f}$ over the identity $\widetilde{\ide}_{f} : \tilde{f}_{e} \circ \sigma_{e}^* \Rightarrow g_e$. For a 1-morphism $h : e \to e'$, 
the 2-morphism $\sigma^{*}_{h}$ is defined as the unique isomorphism in the following diagram (coherence isomorphisms omitted):
\[
\xymatrix{
\tilde{f}_{e'} \circ f^{*}h \hat{\circ} \sigma_{e}^* \ar@{=>}[r]^{\ide * \tilde{\phi}} \ar@{=>}[d]_{\ide * \sigma_{h}^*} & \tilde{f}_{e'} \circ f^{*}h  \circ \sigma_{e}^* \ar@{=>}[r]^{\tau_f * \ide} & h \circ \tilde{f}_{e} \circ \sigma_{e}^* \ar@{=>}[r]^{\ide * \widetilde{\ide}_{f}} & h \circ g_{e} \ar@{=>}[d]^{\tau_g} \\ 
\tilde{f}_{e'} \circ \sigma_{e'}^* \hat{\circ} g^*h \ar@{=>}[r]_{\ide*\tilde{\phi}} & \tilde{f} \circ \sigma_{e'}^* \circ g^*h \ar@{=>}[rr]_{\widetilde{\ide}_{f}*\ide} && g_{e'} \circ g^*h
}
\]
\end{definition}

We refer the readers to Construction 3.3.5 for more precise descriptions.

\begin{proposition}\label{pro_1}
Let $P : \E \to \B$ be fibered in pseudogroupoids. Then, the fiber bicategory $P^{-1}(b)$ is a pseudogroupoid for each object $b$ in $\B$.
\end{proposition}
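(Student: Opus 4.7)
The plan is to check the two defining conditions of a pseudogroupoid for $P^{-1}(b)$: invertibility of every 2-cell, and the property that every 1-cell is an equivalence. The first condition is immediate from the construction of the fiber bicategory. For any $x,y\in P^{-1}(b)$, the hom-category $P^{-1}(b)(x,y)$ is by definition the fiber of $P_{xy}\colon \E(x,y)\to \B(b,b)$ over the object $\ide_b$. Since $P$ is locally fibered in groupoids, this fiber is a groupoid, so every 2-cell of $P^{-1}(b)$ is automatically invertible.

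For the second condition, fix a 1-morphism $f\colon x\to y$ in $P^{-1}(b)$. By hypothesis every 1-cell of $\E$ is cartesian, and $Pf=\ide_b$ is an equivalence in $\B$, so Proposition \ref{equivalence_preserve} gives that $f$ is an equivalence in $\E$. To upgrade this to an equivalence inside the fiber, I apply the cartesian lifting property of $f$ to the data $(g,h,\alpha)=(\ide_y,\ide_b,\theta)$, where $\theta\colon \ide_b\circ \ide_b\Rightarrow \ide_b$ is the unitor that appears in the coherence $\phi=\varphi\cdot\theta^{-1}$ defining the twisted composition $\hat\circ$ on the fiber. Combined with Proposition \ref{lift_iso}, this produces $g'\colon y\to x$ with $Pg'=\ide_b$, so $g'\in P^{-1}(b)(y,x)$, together with a 2-iso $\tilde\alpha\colon f\circ g'\Rightarrow \ide_y$ in $\E$ satisfying $P\tilde\alpha=\theta\cdot\varphi^{-1}$.

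Vertically composing $\tilde\alpha$ with the cartesian lift $\tilde\phi\colon f\mathbin{\hat\circ}g'\Rightarrow f\circ g'$ of $\phi$ yields a 2-iso $f\mathbin{\hat\circ}g'\Rightarrow \ide_y$ in $\E$ whose $P$-image is $(\theta\cdot\varphi^{-1})\cdot(\varphi\cdot\theta^{-1})=\ide_{\ide_b}$; it therefore lives in the fiber and gives $f\mathbin{\hat\circ}g'\cong \ide_y$ inside $P^{-1}(b)$. Applying the same construction with $g'$ in place of $f$ produces $h'\in P^{-1}(b)(x,y)$ with $g'\mathbin{\hat\circ}h'\cong \ide_x$, and the standard bicategorical manipulation $h'\cong \ide_y\mathbin{\hat\circ}h'\cong (f\mathbin{\hat\circ}g')\mathbin{\hat\circ}h'\cong f\mathbin{\hat\circ}(g'\mathbin{\hat\circ}h')\cong f\mathbin{\hat\circ}\ide_x\cong f$ inside $P^{-1}(b)$ then gives $g'\mathbin{\hat\circ}f\cong \ide_x$, so $g'$ is a pseudo-inverse of $f$ in $P^{-1}(b)$. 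The main delicate point is the bookkeeping of the coherence isomorphisms distinguishing $\circ$ from $\hat\circ$: the compatible choice $\alpha=\theta$ in the cartesianness axiom, together with Proposition \ref{lift_iso}, is precisely what makes the ambient pseudo-inverse descend to one living over the identity 2-cell.
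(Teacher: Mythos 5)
Your proof is correct and follows the same strategy as the paper's: 2-cells of $P^{-1}(b)$ are invertible because $P$ is locally fibered in groupoids and vertical composition in the fiber agrees with that of $\E$, and 1-cells are equivalences by applying Proposition \ref{equivalence_preserve} to the (cartesian) 1-morphisms lying over the equivalence $\ide_{b}$. The paper stops at that citation, implicitly treating ``equivalence in $\E$'' as sufficient; your further construction of a pseudo-inverse $g'$ lying in the fiber, with witnessing 2-isomorphisms over $\ide_{\ide_{b}}$ for the twisted composition $\hat{\circ}$, is a legitimate (and welcome) elaboration of a step the paper's one-line argument glosses over, not a different route.
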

\begin{proof}
Recall that $P$ is locally fibered in groupoids, and the vertical composition of $P^{-1}(b)$ coincides with that of $\E$. Thus, every 2-morphism of $P^{-1}(b)$ is invertible. 
Furthermore, Proposition \ref{equivalence_preserve} guarantees that every 1-morphism is an equivalence since $P$ sends it to the identity $\ide_{b}$.
\end{proof}

\begin{proposition}\label{pro_2}
If $P : \E \to \B$ is fibered and cofibered in pseudogroupoids, 
then $f^{*} : P^{-1}(b') \to P^{-1}(b)$ is a biequivalence for a 1-morphism $f : b \to b'$ in $\B$.
\end{proposition}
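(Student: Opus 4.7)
The plan is to verify that $f^*$ is biessentially surjective on objects and a local equivalence on every hom-category, using the cofibered-in-pseudogroupoids structure to supply 1-morphisms in the reverse direction. Transport along cocartesian lifts yields a lax functor $f_* : P^{-1}(b) \to P^{-1}(b')$, and this object assignment $e' \mapsto f_*(e')$ will play the role of the preimage witness for biessential surjectivity.

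For biessential surjectivity, given $e' \in \ob(P^{-1}(b))$, take the cocartesian lift of $f$ at $e'$, a 1-morphism $\bar{f}_{e'} : e' \to f_*(e')$ in $\E$ projecting to $f$, and pair it with the cartesian lift $\tilde{f}_{f_*(e')} : f^*(f_*(e')) \to f_*(e')$ supplied by the fibered structure. Both lie over $f$, so the cartesian property---combined with Proposition \ref{lift_iso} to ensure the auxiliary base 2-cell can be taken to be an identity---produces a 1-morphism $u : e' \to f^*(f_*(e'))$ in $\E$ over $\ide_b$. Because $P^{-1}(b)$ is a pseudogroupoid by Proposition \ref{pro_1}, $u$ is an equivalence in $P^{-1}(b)$, giving $f^*(f_*(e')) \simeq e'$ as required.

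For the local equivalence $f^*_{e_1,e_2} : P^{-1}(b')(e_1,e_2) \to P^{-1}(b)(f^*e_1, f^*e_2)$, these hom-categories are groupoids by Proposition \ref{pro_1}, so it suffices to check essential surjectivity together with fullness and faithfulness. For essential surjectivity: given a 1-morphism $k : f^*e_1 \to f^*e_2$ over $\ide_b$, the composite $\tilde{f}_{e_2} \circ k$ lies over $f$, and since every 1-morphism of $\E$ is cocartesian under our hypothesis, the cocartesian property of $\tilde{f}_{e_1}$ with base 1-cell $\ide_{b'}$ produces $h : e_1 \to e_2$ over $\ide_{b'}$ together with an isomorphism $h \circ \tilde{f}_{e_1} \cong \tilde{f}_{e_2} \circ k$. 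Comparing this with the defining square of $f^*h$ in Definition \ref{fiber_bicategory} and invoking the uniqueness clause of the cartesian property of $\tilde{f}_{e_2}$ forces $f^*h \cong k$ in $P^{-1}(b)$. For fullness and faithfulness on 2-cells: a 2-morphism $\beta : f^*h_1 \Rightarrow f^*h_2$ in $P^{-1}(b)$, after whiskering with $\tilde{f}_{e_2}$ and transporting through the isomorphisms $\tilde{f}_{e_2} \circ f^*h_i \cong h_i \circ \tilde{f}_{e_1}$ from Definition \ref{fiber_bicategory}, becomes a 2-morphism $h_1 \circ \tilde{f}_{e_1} \Rightarrow h_2 \circ \tilde{f}_{e_1}$; the two-dimensional cocartesian property of $\tilde{f}_{e_1}$, refined by the locally-fibered-in-groupoids hypothesis so that lifts over $\ide_{\ide_{b'}}$ exist uniquely, then yields the required unique $\alpha : h_1 \Rightarrow h_2$ in $P^{-1}(b')$ with $f^*\alpha = \beta$.

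The main obstacle I anticipate is the coherence bookkeeping: the constructed 1- and 2-morphisms only arise up to the natural isomorphisms supplied by $\varphi$, $\psi$, and the cleavage, and one must verify carefully that they can be chosen to lie strictly over $\ide_b$, $\ide_{b'}$, $\ide_{\ide_b}$, or $\ide_{\ide_{b'}}$ in the base, and that the resulting cartesian and cocartesian identifications agree with the particular definitions of $f^*$ on 1- and 2-morphisms from Definition \ref{fiber_bicategory} rather than producing only a laxly related map. Proposition \ref{lift_iso}, together with its dual for cocartesian lifts and the local fibered-in-groupoids condition, are the workhorses for these strict placements and for the uniqueness statements that pin down the inverse functor on hom-categories.
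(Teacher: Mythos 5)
Your proposal is correct and follows essentially the same route as the paper: biessential surjectivity via the cocartesian transport $f_*$ and a cartesian lift of the identity giving the unit $e' \to f^*(f_*e')$, essential surjectivity of the hom-functors by factoring $\tilde{f}_{e_2}\circ k$ through the cocartesian $\tilde{f}_{e_1}$, and full faithfulness via the two-dimensional universal properties of the lifts. The only cosmetic difference is that you justify the unit being an equivalence by Proposition \ref{pro_1} (the fiber is a pseudogroupoid) where the paper cites Proposition \ref{equivalence_preserve} directly; both are valid.
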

\begin{proof}
We first show the biessential surjectivity on objects. For an object $e$ in $P^{-1}(b)$, consider the object $f^{*}(f_{*}e)$ in $P^{-1}(b)$. Note that $f_{*}e$ is equipped with a lift $e \to f_{*}e$ of $f$ starting at $e$ and that $f^{*}(f_{*}e)$ is equipped with a lift $f^{*}(f_{*}e) \to f_{*}e$ of $f$ ending at $f_{*}e$.
\[
\xymatrix{
e \ar@{.>}[rr]^{\eta_{e}} \ar[dr]_{} & & f^{*}(f_{*}e) \ar[ld]^{} &
b \ar[rr]^{\ide_{b}} \ar[dr]_{f} & & b \ar[ld]^{f}
\\ 
& f_{*}e && &   b'
}
\]
There exists a lift $\eta_{e} : e \to f^{*}(f_{*}(e))$ of the identity that makes the above left diagram in $\E$ over the right diagram in $B$ commute up to isomorphism. This is an equivalence by Proposition \ref{equivalence_preserve}.

Next, we consider the functor $f^{*} : P^{-1}(b')(e,e') \to P^{-1}(b)(f^*e,f^*e')$ for objects $e,e'$ in $P^{-1}(b)$ and show that this is essentially surjective. For a 1-morphism $h \in P^{-1}(b)(f^{*}e,f^{*}e')$, consider the following left diagram in $\E$ over the right diagram in $\B$:
\[
\xymatrix{
f^{*}e \ar@/^5pt/[r]^{h} \ar@/_5pt/[r]_{f^*k} \ar[d]_{\tilde{f}} & f^*e' \ar[d]^{\tilde{f}} && b \ar@{=}[r] \ar[d]_{f} & b \ar[d]^{f} \\
e \ar[r]_{k} & e' && b' \ar@{=}[r] & b'
}
\]
There exists a 1-morphism $k : e \to e'$ over the identity on $b$, which makes the diagram commute up to isomorphism. The 1-morphism $f^{*}k : f^{*}e \to f^{*}e'$ also makes the diagram commute up to isomorphism. There exists a unique 2-morphism (isomorphism) $h \Rightarrow f^{*}k$ over the identity on $\ide_{b}$.

Finally, we show that $f^*$ is fully faithful.
Given a 2-morphism  $\beta : f^*h \Rightarrow f^*k : f^*e \to f^*e'$ for $h ,k : e \to e'$, consider the following diagrams:
\[
\xymatrix{
f^{*}(e) \ar[rr]^{\tilde{f}_{e}} \ar@/_9pt/[d]^{\stackrel{\beta}{\Rightarrow}}_{f^{*}h} \ar@/^9pt/[d]^{f^{*}k} & & e \ar@/_8pt/[d]_{h}^{\stackrel{\beta_*}{\Rightarrow}} \ar@/^8pt/[d]^{k} && b \ar[rr]^{f} \ar@/_8pt/[d]^{=}_{\ide_{b}} \ar@/^8pt/[d]^{\ide_{b}} & & b' \ar@/_8pt/[d]_{\ide_{b'}}^{=} \ar@/^8pt/[d]^{\ide_{b'}}\\ 
f^{*}(e') \ar[rr]_{\tilde{f}_{e'}} & & e' && b \ar[rr]_{f} & & b'
}
\]
There exists a unique 2-morphism $\beta_* : h \Rightarrow k$ over the identity on $\ide_{b}$ that is compatible with the left diagram. The universality of lifts shows that $f^*(\beta_{*})=\beta$. Similarly, $(f^*\alpha)_*=\alpha$ holds for each $\alpha : h \Rightarrow k$ by the universality of lifts again. 
It completes the result.
\end{proof}

The bicategorical Grothendieck construction was introduced in Construction 3.3.3 of \cite{Buc14}.
He defined it for general {\em trihomomorphisms} using the tricategory structure on bicategories with lax functors, lax natural transformations, and modifications (see \cite{Gur06} for the details of tricategories and trihomomorphisms). For a bicategory $\B$, a trihomomorphism $F : \B^{\coop} \to \Bicat$ consists of a bicategory $Fb$ for each object $b$, a lax functor $f^*$ for each 1-morphism $f$, and a lax natural transformation $\alpha^*$ for each 2-morphism $\alpha$ in $\B$. To describe the precise definition, we require much more complicated coherence axioms than lax functors. 

However, we do not need such higher coherence structures to compute the Euler characteristic.
These are required to assign the compositions and coherence isomorphisms (associators and unitors) to the Grothendieck construction, making it a bicategory. The Euler characteristic of a bicategory only depends on the underlying cat-graph.

For this reason, we shall introduce the notion of trihomomorphisms and the Grothendieck construction for cat-graphs.

\begin{definition}
Let $\B$ be a cat-graph and let $\B^{\coop}$ denote the cat-graph with $\ob(\B^{\coop})=\ob(\B)$ and $\B^{\coop}(x,y)=\B^{\op}(y,x)$.
A {\em trihomomorphism} $F : \B^{\coop} \to \Bicat$ consists of the following data:
\begin{itemize}
\item A bicategory $Fb$ for each object $b$ in $\B$.
\item A lax functor $f^* : Fb' \to Fb$ for a 1-morphism $f : b \to b'$ in $\B$.
\item A lax natural transformation $\alpha^* : g^* \Rightarrow f^*$ for a 2-morphism $\alpha : f \Rightarrow g$ in $\B$.
\end{itemize}
\end{definition}

For example, for a fibered functor $P : \E \to \B$, the fiber bicategories in Definition \ref{fiber_bicategory} yield a trihomomorphism  $P^{*} : \B^{\coop} \to \Bicat$.

\begin{definition}\label{grothen_cat}
Let $F : \B^{\coop} \to \Bicat$ be a trihomomorphism.
The {\em Grothendieck construction} $\Gr(F)$ is a cat-graph consisting of the following data:
\begin{itemize}
\item An object is a pair $(b,x)$ of $b \in \ob(\B)$ and $x \in \ob(Fb)$.
\item A 1-morphism $(b,x) \to (c,y)$ is a pair $(f,u)$ of $f : b \to c$ in $\B$ and of $u : x \to f^*y$ in $Fb$. 
\item A 2-morphism $(f,u) \Rightarrow (g,v) : (b,x) \to (c,y)$ is a pair $(\alpha,\beta)$ of  $\alpha : f \Rightarrow g$ in $\B$ and of $\beta : u \Rightarrow \alpha^*_{y} \circ v$ in $Fb$.
\end{itemize}
\end{definition}

Now, we start to calculate the Euler characteristic of the Grothendieck construction.

\begin{lemma}\label{lemma_1}
Let $\B$ be a measurable cat-graph, and let $F : \B^{\coop} \to \Bicat$ be a trihomomorphism valued in measurable bicategories. 
If we have coweightings on $\zeta_{\B(b,c)}$ and each $\zeta_{Fb(x,f^*y)}$ is all written as $k_*$, then there exists a coweighting on $\Gr(F)((b,x),(c,y))$ defined by $k_{(f,u)}=k_{f}k_{u}$.
\end{lemma}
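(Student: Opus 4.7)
The plan is to verify the defining equation of a coweighting directly for the proposed row vector $k_{(f,u)} = k_f k_u$. Recall that a coweighting $k_*$ on a matrix $\zeta$ is characterized by $\sum_i k_i \zeta(i,j) = 1$ for every index $j$. So, after fixing a target 1-morphism $(g,v) \in \ob(\Gr(F)((b,x),(c,y)))$, the task is to prove
\[
\sum_{(f,u)} k_f k_u \cdot \zeta_{\Gr(F)((b,x),(c,y))}\bigl((f,u),(g,v)\bigr) = 1,
\]
with the outer sum ranging over pairs $f \in \ob(\B(b,c))$ and $u \in \ob(Fb(x, f^*y))$.

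First I would unpack the similarity matrix entry using Definition \ref{grothen_cat}: a 2-morphism $(f,u) \Rightarrow (g,v)$ in $\Gr(F)$ is a pair $(\alpha,\beta)$ with $\alpha \in \B(b,c)(f,g)$ and $\beta \in Fb(x,f^*y)(u, \alpha^*_y \circ v)$, so
\[
\zeta_{\Gr(F)((b,x),(c,y))}\bigl((f,u),(g,v)\bigr) = \sum_{\alpha \in \B(b,c)(f,g)} \zeta_{Fb(x, f^*y)}(u, \alpha^*_y \circ v).
\]
Substituting into the sum above and reordering gives
\[
\sum_f k_f \sum_{\alpha \in \B(b,c)(f,g)} \Bigl( \sum_u k_u \, \zeta_{Fb(x,f^*y)}(u, \alpha^*_y \circ v) \Bigr).
\]
For each fixed $f$ and each $\alpha : f \Rightarrow g$, the composite $\alpha^*_y \circ v$ is a bona fide object of $Fb(x, f^*y)$: the lax natural transformation $\alpha^* : g^* \Rightarrow f^*$ supplies a 1-morphism $\alpha^*_y : g^*y \to f^*y$ in $Fb$, which composes with $v : x \to g^*y$. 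Hence the parenthesized $u$-sum collapses to $1$ by the coweighting hypothesis on $Fb(x, f^*y)$. What remains is $\sum_f k_f \, \zeta_{\B(b,c)}(f,g)$, which equals $1$ by the coweighting hypothesis on $\B(b,c)$.

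The main obstacle is really only bookkeeping: one must confirm that $\alpha^*_y \circ v$ lives in the specific hom-category $Fb(x, f^*y)$ matched to the outer index $f$, so that the coweighting on that category can be applied separately for each $\alpha$, and one must split the factor $k_{(f,u)} = k_f k_u$ cleanly across the outer and inner sums. Once these identifications are made the computation is mechanical, and --- consistently with the paper's observation that Euler characteristics of bicategories depend only on the underlying cat-graph --- no coherence data from the trihomomorphism $F$ or from the composition laws of $\Gr(F)$ enters the argument.
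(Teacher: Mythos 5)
Your proof is correct and follows essentially the same route as the paper's: expand $\zeta_{\Gr(F)((b,x),(c,y))}((f,u),(g,v))$ as a sum over $\alpha : f \Rightarrow g$ of $\zeta_{Fb(x,f^*y)}(u,\alpha^*_y \circ v)$, reorder the sums, collapse the inner $u$-sum to $1$ by the coweighting on $Fb(x,f^*y)$, and finish with the coweighting on $\B(b,c)$. The only difference is that you spell out the bookkeeping (why $\alpha^*_y \circ v$ lies in $Fb(x,f^*y)$) that the paper leaves implicit.
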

\begin{proof}
The direct calculation shows the result as follows:
\begin{align*}
&\sum_{(f,u)}k_{f}k_{u} \zeta_{\Gr(F)((b,x),(c,y))}((f,u),(g,v)) \\
=& \sum_{(f,u)} k_fk_u \sum_{\alpha : f \Rightarrow g} \zeta_{Fb(x,f^*y)} (u, \alpha^*_{y} \circ v) \\
=& \sum_{f} k_f \sum_{\alpha : f \Rightarrow g} \sum_{u : x \to f^*y} k_u \zeta_{Fb(x,f^*y)}(u,\alpha^*_y \circ v) \\
=& \sum_{f} k_{f} \zeta_{\B(b,c)}(f,g) =1.
\end{align*}
\end{proof}

The lemma above used coweightings for plain categories. 
Note that the coweightings appearing in the next lemma are for cat-graphs introduced in Definition \ref{bi_Euler}.

\begin{lemma}\label{lemma_2}
Let $\B$ be a measurable cat-graph, and let $F : \B^{\coop} \to \Bicat$ be a trihomomorphism valued in measurable bicategories. Suppose the similarity matrices $\zeta_{\B}$, $\zeta_{Fb}$, and $\zeta_{\Gr(F)}$ all exist.
If we have coweightings on $\zeta_{\B}$ and each $\zeta_{Fb}$ is all written as $k_*$, then there exists a coweighting on $\Gr(F)$ defined by $k_{(b,x)}=k_{b}k_{x}$.
\end{lemma}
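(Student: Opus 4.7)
The plan is to bootstrap Lemma~\ref{lemma_1} one level up and then finish by a short direct computation, exactly parallel to the pattern of Lemma~\ref{lemma_1} but now summing over objects rather than morphisms.

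First, I would use Lemma~\ref{lemma_1} together with measurability of $\B$ and of each fiber bicategory $Fb$ to rewrite the entries of $\zeta_{\Gr(F)}$. Since $\B(b,c)$ and each $Fb(x, f^*y)$ has Euler characteristic, they admit coweightings denoted $k_*$, and summing the coweighting $k_{(f,u)} = k_f k_u$ of Lemma~\ref{lemma_1} yields
\[
\zeta_{\Gr(F)}((b,x),(c,y)) \;=\; \sum_{f : b \to c} k_f \sum_{u : x \to f^*y} k_u \;=\; \sum_{f : b \to c} k_f \, \zeta_{Fb}(x, f^*y).
\]

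Next, for a fixed target $(c,y)$, I would verify the defining identity of a coweighting on $\zeta_{\Gr(F)}$ for the candidate $k_{(b,x)} = k_b k_x$ by splitting the sum along the two coordinates and applying coweighting identities one level at a time:
\begin{align*}
\sum_{(b,x)} k_b k_x \, \zeta_{\Gr(F)}((b,x),(c,y))
&= \sum_{b \in \ob(\B)} k_b \sum_{f : b \to c} k_f \sum_{x \in \ob(Fb)} k_x \, \zeta_{Fb}(x, f^*y) \\
&= \sum_{b \in \ob(\B)} k_b \sum_{f : b \to c} k_f \\
&= \sum_{b \in \ob(\B)} k_b \, \zeta_{\B}(b,c) \;=\; 1.
\end{align*}
The second equality uses that $k_*$ is a coweighting on $\zeta_{Fb}$; the third uses that summing the coweighting $k_*$ on $\zeta_{\B(b,c)}$ recovers $\chi(\B(b,c)) = \zeta_\B(b,c)$; and the last uses that $k_*$ is a coweighting on $\zeta_{\B}$. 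A symmetric argument handles the weighting side.

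I do not expect any real obstacle; the entire proof is a stacked telescoping of three coweighting identities. The only care required is notational, since the symbol $k_*$ is reused for genuinely different coweightings at several levels—on $\zeta_\B$, on each $\zeta_{Fb}$, on each $\zeta_{\B(b,c)}$, and on each $\zeta_{Fb(x, f^*y)}$—and one must check that each sum is indexed over the correct underlying set so that the right identity is being invoked at each step.
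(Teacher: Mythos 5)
Your proposal is correct and follows essentially the same route as the paper: both compute $\zeta_{\Gr(F)}((b,x),(c,y))$ as the sum of the coweighting $k_{(f,u)}=k_fk_u$ supplied by Lemma~\ref{lemma_1} (legitimate because the hypothesis that $\zeta_{\Gr(F)}$ exists guarantees each morphism category has Euler characteristic), and then telescope the three coweighting identities in the same order. The only superfluous remark is the closing ``symmetric argument for the weighting side,'' which the statement does not require.
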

\begin{proof}
By Lemma \ref{lemma_1}, the following equality follows from the direct calculation:
\begin{align*}
\sum_{(b,x)} k_bk_x \zeta_{\Gr(F)}((b,x),(c,y)) &= \sum_{(b,x)} k_bk_x \chi(\Gr(F)((b,x),(c,y))) \\
&= \sum_{(b,x)} k_bk_x \sum_{(f,u)} k_{(f,u)} \\
&=\sum_{(b,x)} k_bk_x \sum_{(f,u)} k_fk_u \\
&=\sum_{(b,x)} k_bk_x \sum_{f :  b \to c} k_f \chi(Fb(x,f^*y)) \\
&=\sum_{b} k_b \sum_{f : b \to c} k_f \sum_{x \in \ob(Fb)} k_x \zeta_{Fb}(x,f^*y) \\
&=\sum_{b} k_{b} \sum_{f : b \to c} k_f \\
&=\sum_{b} k_b \chi(\B(b,c)) \\
&=\sum_{b} k_{b} \zeta_{\B}(b,c) = 1.
\end{align*}
\end{proof}

The following proposition is a generalization of Leinster's result (Proposition \ref{gr}) for bicategories.

\begin{proposition}\label{gr_2}
Let $P : \E \to \B$ be a fibered lax functor between measurable bicategories having Euler characteristics.
Let $k_{*}$ be a coweighting on $\zeta_{\B}$, and let each $P^{-1}(b)$ have Euler characteristic.
Then,
\[
\chi(\Gr(P^{*})) = \sum_{b \in \ob(\B)} k_{b} \chi(P^{-1}(b)).
\]
\end{proposition}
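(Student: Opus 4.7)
The plan is to reduce Proposition \ref{gr_2} to Lemma \ref{lemma_2} applied to the trihomomorphism $P^{*} : \B^{\coop} \to \Bicat$ that assigns to each $b \in \ob(\B)$ its fiber bicategory $P^{-1}(b)$, and then to reorganize the resulting coweighting sum along the $\B$-coordinate.

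First I would verify the hypotheses of Lemma \ref{lemma_2}. The coweighting $k_{*}$ on $\zeta_{\B}$ is given by assumption; the hypothesis that each $P^{-1}(b)$ has Euler characteristic supplies a coweighting $k_{*}$ on each $\zeta_{P^{-1}(b)}$. The remaining hypothesis, the existence of the similarity matrix $\zeta_{\Gr(P^{*})}$, amounts to showing that every morphism category $\Gr(P^{*})((b,x),(c,y))$ has Euler characteristic. Lemma \ref{lemma_1} already furnishes a coweighting on this morphism category from coweightings on $\zeta_{\B(b,c)}$ and each $\zeta_{P^{-1}(b)(x, f^{*}y)}$; a symmetric computation, performed on rows instead of columns, produces a weighting from the corresponding weightings, all of which exist because $\B$ and every $P^{-1}(b)$ are measurable and have Euler characteristic.

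Once the hypotheses are in place, Lemma \ref{lemma_2} produces a coweighting $k_{(b,x)} = k_{b} k_{x}$ on $\zeta_{\Gr(P^{*})}$. The Euler characteristic is then the sum of the coweighting entries, and I can reindex this double sum by the $\B$-coordinate:
\begin{align*}
\chi(\Gr(P^{*})) &= \sum_{(b,x) \in \ob(\Gr(P^{*}))} k_{b} k_{x} \\
&= \sum_{b \in \ob(\B)} k_{b} \left( \sum_{x \in \ob(P^{-1}(b))} k_{x} \right) \\
&= \sum_{b \in \ob(\B)} k_{b} \chi(P^{-1}(b)),
\end{align*}
which is the claimed formula.

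The only real obstacle is bookkeeping: confirming that each similarity matrix invoked along the way genuinely exists, since Lemmas \ref{lemma_1} and \ref{lemma_2} are phrased one level at a time and the weighting side (as opposed to the coweighting side that is explicitly displayed) has to be noted via duality. All the structural content has been absorbed into those two lemmas, so once the existence of $\zeta_{\Gr(P^{*})}$ is assured, the proof collapses to a single reorganization of a double sum.
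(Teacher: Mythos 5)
Your proposal is correct and follows essentially the same route as the paper: apply Lemma \ref{lemma_2} to obtain the coweighting $k_{(b,x)}=k_{b}k_{x}$ on $\zeta_{\Gr(P^{*})}$, then sum it and regroup by the $\B$-coordinate to recover $\sum_{b}k_{b}\chi(P^{-1}(b))$. Your extra bookkeeping about the existence of the similarity matrices and the dual weighting argument is a reasonable elaboration of hypotheses the paper leaves implicit, but it does not change the argument.
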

\begin{proof}
The desired formula follows from Lemma \ref{lemma_2}:
\[
\chi(\Gr(P^{*})) =\sum_{(b,x)} k_{(b,x)}= \sum_{(b,x)} k_{b}k_{x} = \sum_{b \in \B_{0}} k_{b} \chi(P^{-1}(b)).
\]
\end{proof}

\begin{lemma}\label{G}
A measurable pseudogroupoid $\G$ always has Euler characteristic.
\end{lemma}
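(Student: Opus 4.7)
The plan is to reduce to the connected case and then exhibit explicit weightings and coweightings by exploiting the fact that the similarity matrix becomes a constant positive multiple of the all-ones matrix.

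First I would decompose $\G = \coprod_i \G_i$ into its connected components, as guaranteed by Remark \ref{connectivity}. Since each $\G_i$ is a full sub-bicategory of a measurable bicategory, it is itself measurable; and since being a pseudogroupoid is detected on 1- and 2-morphisms, each $\G_i$ is again a pseudogroupoid. By Proposition \ref{decompose}, it suffices to show that each connected $\G_i$ has Euler characteristic. So I may assume from here on that $\G$ is connected.

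Next, for a connected pseudogroupoid $\G$, fix an object $x_0 \in \ob(\G)$. By Remark \ref{connectivity}, every category of morphisms $\G(x,y)$ is equivalent to the hom-groupoid $\G(x_0,x_0)$. Since Euler characteristic is an invariant of equivalence of categories (Proposition \ref{invariant}), every hom-category of $\G$ has the same Euler characteristic $c := \chi(\G(x_0,x_0))$. The category $\G(x_0,x_0)$ is, moreover, a finite groupoid (every 2-morphism is an isomorphism, and it is equivalent to a finite category, hence equivalent to a finite groupoid), so its Euler characteristic is a sum of reciprocals of automorphism group orders over isomorphism classes of objects. In particular, $c$ is a \emph{strictly positive} rational number, since $\G(x_0,x_0)$ contains at least the identity $\ide_{x_0}$ and is therefore non-empty.

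Writing $n = |\ob(\G)|$, the similarity matrix $\zeta_\G$ is the $n \times n$ matrix whose every entry equals $c > 0$. I would then verify directly that the uniform column vector $k^x = 1/(nc)$ is a weighting and that the uniform row vector $k_x = 1/(nc)$ is a coweighting: for each $i$,
\[
\sum_{j \in \ob(\G)} \zeta_\G(i,j)\, k^j \;=\; n \cdot c \cdot \frac{1}{nc} \;=\; 1,
\]
and similarly on the left. Hence $\G$ has Euler characteristic, with $\chi(\G) = 1/c$. Reassembling the connected components via Proposition \ref{decompose} finishes the proof. The only subtle point, and the one I would flag as the main obstacle, is the non-vanishing of $c$: if the hom-groupoid had Euler characteristic zero the constant matrix would be singular and no weighting could exist, so positivity of the Euler characteristic of a finite groupoid (which is immediate from its expression as a sum of reciprocals of group orders) is essential.
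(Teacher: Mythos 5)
Your proof is correct and follows essentially the same route as the paper: reduce to the connected case, observe via Remark \ref{connectivity} that the similarity matrix is constant with entry $c=\chi(\G(g,g))$, and write down the uniform weighting and coweighting $1/(nc)$. Your explicit verification that $c>0$ --- as the Euler characteristic of a non-empty finite groupoid, a sum of reciprocals of automorphism group orders --- is a worthwhile addition, since the paper divides by $\chi(\G(g,g))$ without remarking that it is nonzero.
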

\begin{proof}
We may assume that $\G$ is connected. Fix an object $g$ in $\G$. 
Remark \ref{connectivity} implies that each $\zeta_{\G}(x,y)$ coincides with $\zeta_{\G}(g,g)=\chi(\G(g,g))$.
It has a weighting $k^*$ and a coweighting $k_*$ given by 
\[
k^*(x)=k_*(x)=\frac{1}{\chi(\G(g,g)) \cdot \ob(\G)^{\sharp}},
\] 
where $\ob(\G)^{\sharp}$ is the number of the objects of $\G$.
This shows that $\chi(\G)=1/\chi(\G(g,g))$.
\end{proof}

Now, let us recall Buckley's work on the relation between fibered lax functors and Grothendieck constructions.
For a fibered lax functor $P : \E \to \B$,
he showed that $\Gr(P^{*})$ is biequivalent to $\E$ as mentioned in Proposition 3.3.11 of his paper \cite{Buc14}. 
He revealed more complicated structures and coherence conditions on $P^*$ as a trihomomorphism for bicategories and established the Grothendieck construction $\Gr(P^{*})$ as a bicategory. 
Note that the underlying cat-graph of Buckley's Grothendieck construction coincides with our definition in Definition \ref{grothen_cat}.
The lemma below immediately follows from his result and Theorem \ref{biequ_Euler}.

\begin{lemma}\label{Buck}
Let $P : \E \to \B$ be a fibered lax functor between measurable bicategories.
If $\E$ has Euler characteristic, then $\chi(\Gr(P^*))=\chi(\E)$.
\end{lemma}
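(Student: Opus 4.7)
The plan is to deduce this lemma essentially in one move, by combining two results that are now in hand: Buckley's Proposition~3.3.11 from \cite{Buc14}, which asserts that the bicategorical Grothendieck construction $\Gr(P^{*})$ is biequivalent to $\E$ whenever $P:\E\to\B$ is a fibered lax functor, together with Main Theorem~1 (Theorem~\ref{biequ_Euler}), which guarantees that Euler characteristic is preserved under biequivalence of measurable bicategories. So the body of the proof is: invoke Buckley's biequivalence $\Gr(P^{*})\simeq \E$, check that both sides qualify as measurable bicategories with Euler characteristic, and conclude $\chi(\Gr(P^{*}))=\chi(\E)$.

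The one point that requires a small argument, rather than pure citation, is measurability of $\Gr(P^{*})$. For this I would observe two things. First, the set of objects of $\Gr(P^{*})$ is $\coprod_{b\in\ob(\B)}\ob(P^{-1}(b))$, and this is finite because $\ob(\B)$ is finite and each $\ob(P^{-1}(b))$ sits inside the finite set $\ob(\E)$ (it is the preimage of $b$ under $P$ on objects). Second, for each pair of objects the hom-category $\Gr(P^{*})((b,x),(c,y))$ is equivalent, via the local equivalence coming from Buckley's biequivalence, to the corresponding hom-category in $\E$, which by measurability of $\E$ is equivalent to a finite category with Euler characteristic. Hence $\Gr(P^{*})$ is measurable in the sense of Definition~\ref{bi_Euler}.

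With measurability of both $\E$ and $\Gr(P^{*})$ established and a biequivalence between them in hand, Theorem~\ref{biequ_Euler} immediately yields that $\Gr(P^{*})$ has Euler characteristic (given that $\E$ does) and that $\chi(\Gr(P^{*}))=\chi(\E)$, which is the desired conclusion.

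The hardest part of the argument, realistically, is not in the proof itself but in the bookkeeping behind the scenes: one must be confident that Buckley's Grothendieck construction, which is built with substantial higher coherence data as a bicategory, has underlying cat-graph matching Definition~\ref{grothen_cat} so that ``$\Gr(P^{*})$'' means the same thing in both contexts. The excerpt has already flagged this (``the underlying cat-graph of Buckley's Grothendieck construction coincides with our definition''), and since Euler characteristic depends only on the underlying cat-graph, no additional coherence verification is needed. That observation is really the engine of the proof; everything else is an application of the machinery already assembled.
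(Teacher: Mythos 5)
Your proposal is correct and follows exactly the route the paper takes: the paper derives this lemma immediately from Buckley's Proposition~3.3.11 (the biequivalence $\Gr(P^{*})\simeq\E$) together with Theorem~\ref{biequ_Euler}, noting that the underlying cat-graph of Buckley's construction matches Definition~\ref{grothen_cat}. Your extra check that $\Gr(P^{*})$ is measurable is a reasonable small supplement that the paper leaves implicit, but it does not change the argument.
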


\begin{theorem}\label{main_2}
Let $P : \E \to \B$ be fibered and cofibered in pseudogroupoids between measurable bicategories having Euler characteristics. If $\B$ is connected, Proposition \ref{pro_2} guarantees that the fiber bicategory $\F$ is determined uniquely up to biequivalence. Then,
\[
\chi(\E)=\chi(\B)\chi(\F).
\]
\end{theorem}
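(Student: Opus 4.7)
The plan is to mirror the proof of Theorem \ref{product_1} in the bicategorical setting, using Proposition \ref{gr_2} in place of Proposition \ref{gr} and Lemma \ref{Buck} as the bicategorical substitute for the equivalence $E \simeq \Gr(P^*)$. Concretely, I will reduce $\chi(\E)$ to a sum over $\ob(\B)$ of coweights times fiber Euler characteristics, and then use connectedness of $\B$ to pull the common value $\chi(\F)$ out of the sum, leaving $\sum_{b} k_{b} = \chi(\B)$.

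First I need each fiber $P^{-1}(b)$ to have an Euler characteristic. Proposition \ref{pro_1} already guarantees that $P^{-1}(b)$ is a pseudogroupoid, and measurability transfers from $\E$: the object set of $P^{-1}(b)$ is a subset of the finite set $\ob(\E)$, and the hom-categories $P^{-1}(b)(e,e')$ sit inside the measurable categories $\E(e,e')$. Hence Lemma \ref{G} supplies $\chi(P^{-1}(b))$. Next, since $\B$ is connected, any two objects are joined by a zigzag of 1-morphisms; Proposition \ref{pro_2} turns each such 1-morphism into a biequivalence between the corresponding fibers, and composing the zigzag yields a biequivalence from any $P^{-1}(b)$ to the chosen $\F$. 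Theorem \ref{biequ_Euler} then gives $\chi(P^{-1}(b)) = \chi(\F)$ for every $b \in \ob(\B)$.

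With these ingredients in hand, the remaining computation is short: Lemma \ref{Buck} gives $\chi(\E) = \chi(\Gr(P^{*}))$, Proposition \ref{gr_2} gives $\chi(\Gr(P^{*})) = \sum_{b \in \ob(\B)} k_{b}\, \chi(P^{-1}(b))$ for a coweighting $k_{*}$ on $\zeta_{\B}$, and pulling the constant $\chi(\F)$ out of the sum produces $\chi(\F) \sum_{b} k_{b} = \chi(\F)\chi(\B)$. I expect no single step to be a serious obstacle, the whole argument being a packaging of results already proved in the paper; the point requiring most care is simply the bookkeeping needed to confirm that each fiber is a \emph{measurable} pseudogroupoid, so that Lemma \ref{G} may be invoked and the hypothesis of Proposition \ref{gr_2} is satisfied.
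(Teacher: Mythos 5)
Your proposal is correct and follows essentially the same route as the paper's own proof: Proposition \ref{pro_1} plus Lemma \ref{G} to get $\chi(\F)$, then Lemma \ref{Buck} and Proposition \ref{gr_2} to reduce $\chi(\E)$ to $\sum_b k_b \chi(\F) = \chi(\B)\chi(\F)$. The only difference is that you spell out two points the paper leaves implicit (the measurability of the fibers and the use of Proposition \ref{pro_2} with Theorem \ref{biequ_Euler} to identify all fiber Euler characteristics), which is sound bookkeeping rather than a new argument.
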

\begin{proof}
By Proposition \ref{pro_1} and Lemma \ref{G}, $\F$ has Euler characteristic.
Our desired formula follows from Lemma \ref{Buck} and Proposition \ref{gr_2}:
\[
\chi(\E) = \chi(\Gr(P^{*})) = \sum_{b \in \ob(\B)} k_b\chi(\F) = \chi(\B)\chi(\F).
\]
\end{proof}

\begin{corollary}
Let $P : \E \to \B$ be fibered and cofibered in pseudogroupoids between measurable bicategories having Euler characteristics. If $\B$ is decomposed as $\coprod \B_{i}$ for connected bicategories $\B_{i}$, we write $\F_i$ as the fiber bicategory over an object of $\B_{i}$. 
Then,
\[
\chi(\E)=\coprod_{i} \chi(\B_{i})\chi(\F_{i}).
\]
\end{corollary}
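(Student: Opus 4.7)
The plan is to mimic the proof of the analogous corollary in the category case (after Theorem \ref{product_1}). First I would define $\E_i$ to be the full sub-bicategory of $\E$ spanned by the objects $P^{-1}(\ob(\B_i))$, so that, as cat-graphs, $\E = \coprod_i \E_i$ and the restriction $P_i := P|_{\E_i} : \E_i \to \B_i$ is well defined. The fact that the decomposition of $\E$ matches the component decomposition of $\B$ uses the lifting property: any zigzag of $1$-morphisms in $\B$ between $Pe$ and $Pe'$ lifts to a zigzag in $\E$ (via cartesian and co-cartesian lifts at endpoints), so two objects of $\E$ lie in the same component whenever their images do, and conversely $P$ cannot send a connected piece of $\E$ into two different $\B_i$.

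Next I would check that each $P_i$ is itself fibered and cofibered in pseudogroupoids. This is essentially tautological: cartesian and co-cartesian lifts of $1$- and $2$-morphisms in $\B_i$ are supplied by those in $\B$ and automatically land in $\E_i$, and local fiberedness restricts to hom-categories unchanged. I would also note that the fiber bicategory $\F_i = P_i^{-1}(b)$ for any $b \in \ob(\B_i)$ coincides with $P^{-1}(b)$, and is uniquely determined up to biequivalence by Proposition \ref{pro_2} applied inside the connected bicategory $\B_i$.

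The remaining routine check is that each $\E_i$ has Euler characteristic. Since the similarity matrix $\zeta_\E$ is block diagonal with blocks $\zeta_{\E_i}$ (morphism categories between objects of distinct $\E_i$ being empty), a weighting and coweighting on $\zeta_\E$ restrict to one on each $\zeta_{\E_i}$; together with the hypothesis that each $\B_i$ has Euler characteristic this lets us apply Theorem \ref{main_2} to $P_i$, yielding $\chi(\E_i) = \chi(\B_i)\chi(\F_i)$. Finally, Proposition \ref{decompose}(1) gives
\[
\chi(\E) = \sum_i \chi(\E_i) = \sum_i \chi(\B_i)\chi(\F_i),
\]
which is the stated formula (the $\coprod$ in the corollary's display should be read as $\sum$).

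The only mild obstacle I anticipate is justifying the decomposition $\E = \coprod_i \E_i$ cleanly: one must argue that $P$ cannot mix components of $\E$ across components of $\B$, which is where one really uses that $P$ is (co)fibered in pseudogroupoids rather than merely a lax functor. Everything else is bookkeeping on top of Theorem \ref{main_2} and Proposition \ref{decompose}.
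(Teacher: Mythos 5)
Your proposal is correct and follows essentially the same route as the paper: decompose $\E$ as $\coprod_i \E_i$ over the components $\B_i$, check that the restrictions $P|_{\E_i}$ remain fibered and cofibered in pseudogroupoids, and combine Theorem \ref{main_2} with Proposition \ref{decompose}. You in fact supply more detail than the paper does, in particular on why each $\E_i$ and $\B_i$ inherits an Euler characteristic (the block-diagonal similarity matrix) and on why the object decomposition of $\E$ is compatible with $P$.
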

\begin{proof}
Let $\E_{i}$ denote the full sub-bicategory of $\E$ with the object $P^{-1}(\ob(\B_{i}))$.
That is, the category of morphisms $\E_{i}(x,y)=\E(x,y)$ for $x,y \in \ob(\E_{i})$ and the compositions are the same as $\E$. Then, $\E$ is decomposed as $\coprod_{i} \E_{i}$.
It suffices to verify that the restriction $P_{|\E_{i}} :\E_{i} \to \B_{i}$ is fibered and cofibered in pseudogroupoids.
Obviously, this is locally fibered and cofibered in groupoids. The connectivity and the lifting property of 1-morphisms guarantee that every 1-morphism is cartesian and co-cartesian. 
Hence, the desired formula follows from Theorem \ref{main_2} and Proposition \ref{decompose}:
\[
\chi(\E)=\sum_{i} \chi(\E_{i}) = \sum_{i} \chi(\B_{i})\chi(\F_{i}).
\]

\end{proof}

\end{document}